\documentclass[11pt]{article}
\usepackage[margin=1.1in]{geometry}
\usepackage{amsmath,amssymb,amsthm,amsfonts}
\usepackage{mathtools}
\usepackage{graphicx}
\usepackage{booktabs}
\usepackage{tikz}
\usetikzlibrary{shapes.geometric,positioning}
\usepackage{algorithm}
\usepackage{algorithmic}
\usepackage[colorlinks=true,linkcolor=blue!50!black,citecolor=blue!50!black,urlcolor=blue!50!black]{hyperref}

\theoremstyle{plain}
\newtheorem{theorem}{Theorem}[section]
\newtheorem{lemma}[theorem]{Lemma}
\newtheorem{proposition}[theorem]{Proposition}

\theoremstyle{definition}

\theoremstyle{remark}

\newcommand{\email}[1]{\texttt{#1}}

\newcommand{\sphere}{\mathbb{S}^2}
\newcommand{\R}{\mathbb{R}}
\newcommand{\dA}{\,dA}
\newcommand{\gs}{g_{\scriptscriptstyle S}}

\newcommand{\osc}{\operatorname{osc}}

\newcommand{\Lqw}[1]{L^{#1}_{w}}
\newcommand{\norm}[1]{\lVert #1 \rVert}
\newcommand{\abs}[1]{\lvert #1 \rvert}
\newcommand{\tri}{\triangle}

\newcommand{\range}{\operatorname{range}}
\newcommand{\rank}{\operatorname{rank}}

\title{Conformal Spherical Splines for the Laplace--Beltrami Operator on
Genus-Zero Surfaces: Construction, Algorithm, and Experiments}
\author{Shelvean Kapita\\[2pt]
\small Department of Mathematics, Texas A\&M University, College Station, TX 77843, USA\\
\small \email{kapita@tamu.edu}}
\date{}

\begin{document}
\maketitle

\begin{abstract}
Every smooth closed Riemannian surface of genus zero is isometric to the
unit sphere carrying a conformal metric, the round metric multiplied by
a positive factor. This paper solves the Poisson equation and the
eigenvalue problem of the Laplace--Beltrami operator of such a metric
with smooth spherical splines, taking the conformal factor as the only
description of the geometry. Because the Dirichlet energy is conformally
invariant in two dimensions, the stiffness matrix is the round-sphere
matrix for every conformal metric, and the factor enters only the load
vector, the mean-value constraint and the weighted mass matrix. A
surface embedded in space with a conformal parametrization, a surface
of revolution, and a surface given only as a triangle mesh are
instances of the same problem that differ in how the factor is
obtained; the first two are treated here, and the factor is checked
against an area identity before any equation is discretized. The
unknown is a spline on a fixed spherical triangulation in broken
Bernstein--B\'ezier form, with $C^r$ smoothness imposed algebraically
through edge functionals and the conforming space realized by a
null-space matrix computed once and reused for every metric. Each step
of the computation is stated with the formulas it needs. Experiments on
the round metric, a prolate spheroid, a dumbbell of revolution, an
Evans--Fung red-blood-cell profile and a prescribed factor with no
embedding show the rates $d+1$ in $L^2$ and $d$ in energy for spline
degree $d$, the rate $2d$ for eigenvalues with reproduction of the
even-order eigenvalues of the round metric up to degree $d$ to
roundoff, and, against parametric surface finite elements on the same
triangulations, smaller errors at comparable numbers of unknowns for
degrees four and six.
\end{abstract}

\noindent\textbf{Keywords.} Laplace--Beltrami operator, Poisson equation on surfaces, spherical splines,
conformal parametrization, Bernstein--B\'ezier methods, genus-zero surfaces

\noindent\textbf{MSC 2020.} 65N30, 65N25, 65D07, 41A15, 58J50

\section{Introduction}\label{sec:intro}

Let $\lambda$ be a smooth function on the unit sphere $\sphere$ and let
$\bar g = e^{2\lambda}\gs$ be the conformal metric it defines, $\gs$
being the round metric. We write $M$ for the Riemannian surface
$(\sphere, \bar g)$, $\Delta_M$ for its Laplace--Beltrami operator and
$dA_M = e^{2\lambda}\dA$ for its area element. For a function $f$ on
$M$ with zero mean, the Poisson equation
\begin{equation}\label{eq:poissonM}
  -\Delta_M u = f \quad\text{on } M, \qquad \int_M u \dA_M = 0,
\end{equation}
and the eigenvalue problem $-\Delta_M u = \mu u$ are the two model
problems for second-order elliptic computation on a surface, and they
share their matrices. By the uniformization theorem \cite{Jost06} every
smooth closed Riemannian surface of genus zero is isometric to such an
$M$ for some $\lambda$, so \eqref{eq:poissonM} covers the
Laplace--Beltrami operator of every smooth closed genus-zero surface,
and the isometry is what a conformal parametrization of an embedded
surface provides. The question of this paper is practical: given
$\lambda$, how does one turn either problem into a matrix problem for a
smooth high-order spline on the sphere, and what has to be computed at
each step? The answer has three parts, and the paper is organized
around them.

First, the geometry. The whole of it is the conformal factor
$w = e^{2\lambda}$, one positive function on the sphere. How $w$ is
obtained depends on the instance: for a surface in $\R^3$ with a
conformal parametrization $\Phi\colon\sphere\to M$, $w$ is the area
factor of $\Phi$; for a surface of revolution, $w$ comes from a
one-dimensional nonlinear equation, the Mercator matching, and is
checked against a closed-form area; a prescribed $\lambda$ needs no
surface at all; and a surface given only as a triangle mesh requires
the map to be computed, which is deferred to a separate paper. All
four instances feed the same computation (Section~\ref{sec:extension}).

Second, the approximation space. The unknown is a spherical spline of
degree $d$ and smoothness $r$ on a fixed spherical triangulation
\cite{AlfNS96b, LaiSchumaker07}. We write it in broken
Bernstein--B\'ezier form \cite{Kapita26BCC}: every spherical triangle
keeps its own $\binom{d+2}{2}$ coefficients, $C^r$ smoothness across an
edge is a set of linear functionals whose only geometric input is the
spherical barycentric coordinates of the opposite vertex, the conforming
space is the kernel of the stacked smoothness matrix, and a null-space
matrix computed once turns the constrained problem into a symmetric
positive definite system. Nothing in this part depends on $\lambda$
(Section~\ref{sec:background}).

Third, the equation. Because the Dirichlet energy is conformally
invariant in two dimensions, the weak form of \eqref{eq:poissonM} in
the round metric reads
\begin{equation}\label{eq:poissonS-intro}
  \int_{\sphere} \nabla u \cdot \nabla v \dA
  = \int_{\sphere} w\, f\, v \dA
  \quad\text{for all } v, \qquad \int_{\sphere} w\, u \dA = 0,
\end{equation}
with the round gradient and area element. The stiffness matrix is
therefore the round-sphere matrix for every conformal metric, assembled
once, and $\lambda$ enters the load vector and the mean constraint
through the single factor $w$ at the quadrature points
(Section~\ref{sec:poisson}); the eigenvalue problem is the generalized
eigenproblem of the same stiffness matrix and the weighted mass
matrix. Section~\ref{sec:algorithm} lists the computation step by
step with the formulas each step evaluates.

The contribution is the formulation in terms of the conformal metric
and what it makes computable, rather than a new spline space: the
conformal metric on the sphere as the object discretized, with embedded
surfaces, surfaces of revolution, prescribed factors and meshed
surfaces as instances that differ only in how $w$ is supplied; the
geometry carried by one scalar weight without approximation whenever
$\lambda$ is known exactly, so that the discretization itself
introduces no geometric consistency error; the metric-independent
stiffness matrix and the reusable smoothness and null-space matrices;
the eigenvalue formulation with its parity reproduction for the round
metric; and a computational comparison with parametric surface finite
elements. For the round metric the linear system is that of Baramidze
and Lai \cite{BarLai06}.

The experiments (Section~\ref{sec:numerics}) use five instances: the
round metric, a prolate spheroid, a strongly necked dumbbell of
revolution, the Evans--Fung red-blood-cell profile in physical units,
and a prescribed factor with no embedding. On all of them, the spline
solution of the Poisson equation
converges at the rates $d+1$ in $L^2(M)$ and $d$ in the $H^1(M)$
seminorm expected from the approximation theory of spherical splines
\cite[Ch.~14]{LaiSchumaker07}, and the eigenvalues converge at the rate $2d$
with reproduction of the even-order eigenvalues of the round metric to
roundoff. The analysis behind the rates, and surfaces given only as
triangle meshes, for which $\lambda$ must itself be computed, are
deferred to separate papers; here the emphasis is on what is computed
and how.

Throughout, $\sphere$ is the unit sphere with round metric $\gs$,
surface measure $\dA$ and tangential gradient $\nabla$; $W^{k,q}$ are
Sobolev spaces with norm $\norm{\cdot}_{k,q,\Omega}$ and top-order
seminorm $\abs{\cdot}_{k,q,\Omega}$; eigenvalues are $\mu$ and
$\lambda$ is reserved for the logarithm of the conformal factor;
references of the form
Theorem~13.30 are to \cite{LaiSchumaker07}.

\section{Related work}\label{sec:related}

\emph{Spherical splines.} The foundational papers are
\cite{AlfNS95, AlfNS96a, AlfNS96b, AlfNS96c}, which introduced circular
and spherical Bernstein--B\'ezier polynomials, established dimension and
local bases for homogeneous spline spaces, and applied the construction
to fitting scattered data on spheres and sphere-like surfaces. The
approximation theory and the quasi-interpolation machinery are due to
Neamtu and Schumaker \cite{NeamtuSchumaker04},
\cite[Ch.~14]{LaiSchumaker07}. Spherical splines have been used
extensively for data fitting, by minimal energy and least squares
\cite{BarL05, BarLaiShum06, BarLai11} and with shape constraints on the
Bernstein--B\'ezier coefficients \cite{BarLai18}; such pointwise
constraints transfer unchanged to genus-zero surfaces through the
parametrization of this paper.

\emph{Spherical splines for PDEs, and the sphere-like setting of
Baramidze and Lai.} The closest prior work is that of Baramidze and
Lai \cite{BarLai06}: a second-order elliptic equation of
Laplace--Beltrami type on the unit sphere is solved by the Galerkin
method in $S^r_d(\tri)$, with existence, uniqueness and convergence
proved, the smoothness conditions enforced through the constrained
least-squares and augmented Lagrangian machinery of
\cite{AwanouLaiWenston06}, and numerical rates reported;
\cite{BarLaiShum06} introduces nonhomogeneous spherical splines,
restrictions of all trivariate polynomials of degree at most $d$, so
that constants are reproduced for every degree. In the same
constrained-coefficient framework, Lai and Lee \cite{LaiLee22} replace
the Galerkin form by collocation, the strong form imposed at interior
collocation points and the smoothness conditions imposed as constraints
on the coefficients, with the overdetermined constrained system solved
by least squares, convergence proved for domains of positive reach, and
the approach extended to the three-dimensional Monge--Amp\`ere
equation \cite{LaiLee23}. That realization applies to the transplanted
problem of this paper without change, since the strong form
$-\Delta_{\sphere}\tilde u = w\tilde f$ involves only the spherical
Laplacian of the Bernstein basis, algebraic in the coefficients by the
formulas of Section~\ref{sec:background}, and the smoothness matrix is
the same; we use the Galerkin form because it keeps the stiffness
matrix symmetric positive semidefinite and free of the weight, which
the eigenproblem needs. Surfaces other than the sphere enter that line
of work through the sphere-like construction of
\cite{AlfNS96a, AlfNS96b}, a radial graph identified with the sphere by
$x \mapsto x/\abs{x}$, which requires $M$ to be star-shaped and puts
the surface into both the stiffness and the mass form through a
general pulled-back metric. Under the conformal parametrization used
here the Dirichlet form is invariant (Lemma~\ref{lem:transplant}), the
stiffness matrix is the round-sphere matrix of \cite{BarLai06}, and
the surface enters only through the scalar weight $w$;
the nonhomogeneous spaces of \cite{BarLaiShum06} would lift the parity
restriction at the cost of Proposition~\ref{prop:parity}.

\emph{Finite elements on surfaces.} The dominant paradigm is the
surface finite element method of Dziuk \cite{Dziuk88}, surveyed in
\cite{DziukElliott13} and, for the parametric, trace and narrow-band
variants with a full account of the geometric error, in Bonito, Demlow
and Nochetto \cite{BonitoDemlowNochetto20}: the surface is approximated
by an interpolating polyhedron or an isoparametric lift, and a
geometric consistency error enters alongside the Galerkin error.
Higher-order parametric elements and their pointwise and a posteriori
estimates are due to Demlow \cite{Demlow09} and Bonito and Demlow
\cite{BonitoDemlow19}, adaptivity on implicit surfaces to Demlow and
Dziuk \cite{DemlowDziuk07}, and the a priori theory for the
Laplace--Beltrami eigenvalue problem to Bonito, Demlow and Owen
\cite{BonitoDemlowOwen18}. In all of these high order requires curved
elements and lifted quadrature, and $C^1$ conformity on surfaces is
delicate. Here, once a conformal parametrization is known, the geometry
is represented through a smooth scalar weight with no further
approximation, the discretization introduces no geometric consistency
error, $C^r$ smoothness comes from the edge functionals, and the
stiffness matrix is independent of the surface. The price is the
conformal factor itself, which for a surface given only as a triangle
mesh must be computed from an approximately conformal map
\cite{GuYau03,KSBC12}, the subject of a companion paper.

\emph{Eigenvalue approximation and spectral geometry.} The Galerkin
approximation theory of symmetric eigenvalue problems is classical
\cite{BabuskaOsborn91, Boffi10}; Hersch's inequality \cite{Hersch70}
and its extension along the spectrum \cite{KNPP21} give a posteriori
bounds that every computed genus-zero spectrum must respect.

\section{Spherical splines in broken Bernstein--B\'ezier form}
\label{sec:background}

This section sets out the spline machinery in the form in which it is
used throughout the paper and in the code. Every spherical triangle
keeps its own Bernstein--B\'ezier coefficients. Smoothness across an
edge is a finite set of linear functionals of the coefficients on the
two triangles that share the edge. The conforming spline space is the
kernel of the matrix obtained by stacking these functionals, and no
global basis is ever constructed. This is the Bernstein-constraint
representation of \cite{Kapita26BCC}, specialized to the sphere; the
ingredients are those of \cite[Ch.~13]{LaiSchumaker07}, and references of
the form Theorem~13.30 are to that book. The section closes with the
structural features of the spherical theory that shape everything that
follows.

\subsection{Spherical triangulations}\label{ssec:triangulations}

A spherical triangle $T=\langle v_1,v_2,v_3\rangle$ is the region of
$\sphere$ bounded by the great-circle arcs joining $v_1,v_2,v_3\in
\sphere$, assumed to lie in an open hemisphere so that the vertices are
linearly independent in $\R^3$. A spherical triangulation $\tri$ is a
finite set of spherical triangles covering $\sphere$, any two meeting
in a vertex, an edge, or not at all, with $N_V$, $N_E$, $N_T$
vertices, edges and triangles. Every edge belongs to exactly two
triangles, and Euler's relation gives
\begin{equation}\label{eq:euler}
  N_E=3N_V-6,\qquad N_T=2N_V-4
\end{equation}
(Theorem~13.33). The mesh size $\abs{\tri}$ is the longest geodesic
edge and $\theta_\tri$ the smallest angle. All experiments use
icosahedral refinements: level $L$ has $N_T=20\cdot4^{L}$ triangles,
obtained by splitting every triangle of level $L-1$ at the geodesic
midpoints of its edges.

\subsection{Spherical barycentric coordinates and Bernstein polynomials}
\label{ssec:construction}

Let $V_T=[\,v_1\ v_2\ v_3\,]\in\R^{3\times3}$ be the matrix whose
columns are the vertices of $T$. It is invertible because $T$ lies in a
hemisphere. The spherical barycentric coordinates of a point $x\in\R^3$
relative to $T$ are
\begin{equation}\label{eq:sphbary}
  b(x)=(b_1,b_2,b_3)=V_T^{-1}x,\qquad\text{so that}\qquad
  x=b_1v_1+b_2v_2+b_3v_3 .
\end{equation}
They are linear, not affine, functions of $x$: nonnegative on $T$, with
$b_1+b_2+b_3>1$ in the interior of $T$. The spherical Bernstein basis
polynomials of degree $d$ relative to $T$ are
\begin{equation}\label{eq:sphbern}
  B^d_{ijk}(x)=\frac{d!}{i!\,j!\,k!}\,b_1(x)^ib_2(x)^jb_3(x)^k,
  \qquad i+j+k=d ,
\end{equation}
restricted to $\sphere$. Each is the restriction of a homogeneous
trivariate polynomial of degree $d$, and the $\binom{d+2}{2}$ functions
\eqref{eq:sphbern} span the space $\mathcal B_d$ of spherical polynomials
of degree $d$, which is exactly the restriction to $\sphere$ of all
homogeneous polynomials of degree $d$ in $\R^3$ (Section~13.1). Since
$\mathcal B_d$ does not depend on $T$, every $p\in\mathcal B_d$ has a
unique B-form relative to every spherical triangle,
\begin{equation}\label{eq:bform}
  p=\sum_{i+j+k=d}c_{ijk}B^d_{ijk},
\end{equation}
and we associate the B-coefficient $c_{ijk}$ with the spherical domain
point
\begin{equation}\label{eq:domainpoints}
  \xi_{ijk}=\frac{iv_1+jv_2+kv_3}{\abs{iv_1+jv_2+kv_3}},\qquad i+j+k=d .
\end{equation}
We write $c_T\in\R^{\binom{d+2}{2}}$ for the vector of B-coefficients
of $p$ on $T$, ordered by domain points. Decomposing into spherical
harmonics,
\begin{equation}\label{eq:parity0}
  \mathcal B_d=\mathcal Y_d\oplus\mathcal Y_{d-2}\oplus\cdots
  \qquad(\text{down to }\mathcal Y_0\text{ or }\mathcal Y_1\text{ by parity}),
\end{equation}
where $\mathcal Y_\ell$ is the $(2\ell+1)$-dimensional space of
spherical harmonics of exact degree $\ell$ (Theorem~13.22).

For a spherical triangulation $\tri$ the broken spline space is
\begin{equation}\label{eq:broken}
  S^{-1}_d(\tri)=\{\,s:\sphere\to\R:\ s|_T\in\mathcal B_d\ \ \forall T\in\tri\,\},
\end{equation}
with no continuity across edges. A function in $S^{-1}_d(\tri)$ is
determined by the concatenation
\begin{equation}\label{eq:brokencoeff}
  c=(c_{T_1},\dots,c_{T_{N_T}})\in\R^N,\qquad N=N_T\binom{d+2}{2},
\end{equation}
of its B-coefficient vectors on the triangles of $\tri$. No coefficient
is shared between triangles, and a domain point on a common edge carries
one coefficient from each of the two triangles that contain it. The
spline spaces of interest are
\begin{equation}\label{eq:Srd}
  S^r_d(\tri)=\{\,s\in C^r(\sphere):\ s|_T\in\mathcal B_d\ \ \forall T\in\tri\,\}
  \subset S^{-1}_d(\tri),\qquad r\ge0 .
\end{equation}
Every planar Bernstein--B\'ezier algorithm whose input is a vector of
barycentric coordinates applies to \eqref{eq:sphbern} verbatim, since
the algorithms never use $b_1+b_2+b_3=1$: de Casteljau evaluation,
directional differentiation, and subdivision
\cite[Ch.~13]{LaiSchumaker07}. Since every global
spherical polynomial restricts to each triangle as a member of
$\mathcal B_d$ and is smooth,
\begin{equation}\label{eq:global}
  \mathcal B_d\subset S^r_d(\tri)\qquad\text{for every }r\ge0 .
\end{equation}

\subsection{Derivatives of the homogeneous extension}\label{ssec:derivatives}

Regard $B^d_{ijk}$ as the homogeneous polynomial \eqref{eq:sphbern} on
$\R^3$. For a direction $z\in\R^3$ let $D_z$ be the directional
derivative and $a=(a_1,a_2,a_3)=V_T^{-1}z$ the spherical barycentric
coordinates of $z$. Then
\begin{equation}\label{eq:bern-deriv}
  D_zB^d_{ijk}=d\bigl(a_1B^{d-1}_{i-1,j,k}+a_2B^{d-1}_{i,j-1,k}+a_3B^{d-1}_{i,j,k-1}\bigr),
\end{equation}
with the convention that a Bernstein polynomial with a negative index
is zero (Theorem~13.28). Taking $z=e_1,e_2,e_3$ gives the ambient
gradient of the homogeneous extension $H_p$ of $p\in\mathcal B_d$ in
coefficient form, with $\alpha=(i,j,k)$ and $e_m$ the $m$th unit
multi-index,
\begin{equation}\label{eq:gradmatrix}
  \frac{\partial H_p}{\partial x_\ell}
  =d\sum_{\abs{\alpha}=d}c_\alpha\sum_{m=1}^3(V_T^{-1})_{m\ell}\,B^{d-1}_{\alpha-e_m},
  \qquad \ell=1,2,3 .
\end{equation}
We write $G_T$ for the resulting sparse matrix, the Bernstein gradient
matrix of \cite{Kapita26BCC} with the rows of $V_T^{-1}$ in the place
of the planar barycentric gradients. Euler's identity
$x\cdot\nabla H_p(x)=d\,H_p(x)$ gives, for $\abs{x}=1$, the tangential
gradient
\begin{equation}\label{eq:tangrad}
  \nabla_{\sphere}p(x)=(I-xx^{\top})\nabla H_p(x)=\nabla H_p(x)-d\,p(x)\,x .
\end{equation}
The first form is what the assembly evaluates at quadrature points; the
second shows that the tangential gradient is algebraic in the
coefficients, with ambient components in $\mathcal B_{d+1}$
(Theorem~13.15).

\subsection{Restriction to an edge}\label{ssec:edge}

Let $e=\langle v_1,v_2\rangle$ be the edge of $T$ opposite $v_3$; it is
an arc of the great circle through $v_1$ and $v_2$. On $e$ the third
coordinate vanishes, $b_3=0$, so by \eqref{eq:sphbern}
\begin{equation}\label{eq:edge-restriction}
  p|_e=\sum_{i+j=d}c_{ij0}\,B^d_{ij0}|_e,\qquad
  B^d_{ij0}(x)=\binom di\,b_1^ib_2^j\ \ \text{for }x=b_1v_1+b_2v_2\in e .
\end{equation}
The $B^d_{ij0}|_e$ are the linearly independent circular Bernstein
polynomials on the arc \cite{AlfNS95}, so two polynomials from the two
triangles sharing $e$ agree on $e$ if and only if their coefficients at
the $d+1$ shared domain points agree. In arc length the restriction is
a trigonometric polynomial (Remark~13.4), which is why quadrature is
unavoidable in assembly.

\subsection{Smoothness functionals across an edge}\label{ssec:smoothness}

Let $T=\langle v_1,v_2,v_3\rangle$ and $\widetilde T=\langle v_4,v_3,v_2\rangle$
share the edge $e=\langle v_2,v_3\rangle$, and let $s\in S^{-1}_d(\tri)$
have B-coefficients $c_{ijk}$ on $T$ and $\widetilde c_{ijk}$ on
$\widetilde T$, each indexed by the vertex order of its own triangle.
Let $(\beta_1,\beta_2,\beta_3)=V_T^{-1}v_4$ be the spherical barycentric
coordinates of the fourth vertex relative to $T$. By Theorem~13.30,
$s$ is $C^r$ across $e$ if and only if
\begin{equation}\label{eq:LS-smoothness}
  \widetilde c_{njk}=\sum_{\nu+\mu+\kappa=n}c_{\nu,\,k+\mu,\,j+\kappa}\,
  B^n_{\nu\mu\kappa}(v_4),\qquad j+k=d-n,\quad n=0,\dots,r,
\end{equation}
where $B^n_{\nu\mu\kappa}(v_4)=\frac{n!}{\nu!\mu!\kappa!}\beta_1^\nu\beta_2^\mu\beta_3^\kappa$.
The first two layers read
\begin{align}
  n=0:&\qquad \widetilde c_{0jk}=c_{0kj},\qquad j+k=d, \label{eq:C0-rows}\\
  n=1:&\qquad \widetilde c_{1jk}=\beta_1c_{1kj}+\beta_2c_{0,k+1,j}+\beta_3c_{0,k,j+1},
  \qquad j+k=d-1 . \label{eq:C1-rows}
\end{align}
The layer $n=0$ is the coefficient matching of Section~\ref{ssec:edge}.
Each $C^1$ condition \eqref{eq:C1-rows} ties one coefficient of
$\widetilde T$ in the first row parallel to $e$ to three coefficients of
$T$, the four circled points of Figure~\ref{fig:stencil} in Appendix~\ref{sm:depcounts}; the stencil is
the planar one and only the numbers $\beta_1,\beta_2,\beta_3$ change.
The layer $n$ involves the coefficients in the rows $0,\dots,n$ parallel
to $e$ on both triangles, and the difference of the two sides of
\eqref{eq:LS-smoothness} is a linear functional of the broken
coefficient vector $c$ supported on the two triangles adjacent to $e$.
We call these the smoothness functionals of order $r$ across $e$ and
write
\begin{equation}\label{eq:Jre}
  J^{(r)}_{0,e}\in\R^{m_e\times N},\qquad m_e=\sum_{n=0}^{r}(d+1-n),
\end{equation}
for the matrix whose rows are these functionals, so that $s$ is $C^r$
across $e$ if and only if $J^{(r)}_{0,e}c=0$. A row of layer $n$ has at
most $\binom{n+2}{2}+1$ nonzero entries. For $r=0$ we write
$C_{0,e}=J^{(0)}_{0,e}$. The subscript $0$ marks the scalar slot of the
constraint complex of \cite{Kapita26BCC}; only that slot is needed in
this paper, since the Laplace--Beltrami problem is scalar and of second
order.

The functionals \eqref{eq:LS-smoothness} describe $C^r$ continuity on
the curved surface exactly: Theorem~13.30 is proved by extending
$s|_T$ and $s|_{\widetilde T}$ homogeneously to the tetrahedra with apex
at the origin, which share the planar face $\langle0,v_2,v_3\rangle$,
and applying the trivariate $C^r$ conditions across that face. No
element map is involved. In the terminology of \cite[Sec.~7]{Kapita26BCC},
where the transfer of $C^r$ functionals to curved patches through
element maps is left open, the sphere is the case in which the
polynomials live on the surface itself, so the only geometric input is
$V_T^{-1}v_4$.

Stacking the blocks $J^{(r)}_{0,e}$ over all $N_E$ edges gives the
smoothness matrix of the space,
\begin{equation}\label{eq:Srd-kernel}
  J^{(r)}_0\in\R^{m\times N},\qquad m=N_E\sum_{n=0}^{r}(d+1-n),\qquad
  S^r_d(\tri)=\ker J^{(r)}_0\subset S^{-1}_d(\tri).
\end{equation}
The implementation stores the edge blocks, so the matrix has $O(N_E)$
rows and nonzeros for fixed $d$ and $r$. Its rows are linearly
dependent, since the conditions around a vertex are related, and this
is allowed throughout; the number of dependencies is counted in
Appendix~\ref{sm:depcounts} and confirmed on the
meshes used.

\subsection{Dimension from the rank}\label{ssec:dimension}

The dimension of the spline space is $N-\rank J^{(r)}_0$ whether or
not the rows are independent, with the rank
delivered by the elimination that builds $Z$; closed-form counts
(Theorem~13.45) are recalled in Appendix~\ref{sm:depcounts} and checked against the computed ranks. For $d<3r+2$ the
dimension can depend on the positions of the vertices
\cite{AlfNS96c}, and the rank is then the only reliable count. The
entire construction lives on the linear structure of $\R^3$: a general
closed surface carries no ambient cone of polynomials, so any extension
is a transfer from the sphere, by the radial projection of
\cite{AlfNS96a} for star-shaped surfaces or by the conformal
parametrization of the next section for every smooth closed genus-zero
surface.

\section{The conformal metric and its instances}\label{sec:extension}

\subsection{The conformal metric on the sphere}\label{ssec:setting}

Let $\lambda \in C^\infty(\sphere)$, $\bar g := e^{2\lambda}\gs$, and
$M := (\sphere, \bar g)$. We write $w := e^{2\lambda}$ for the conformal
factor. The area element, gradient and Laplace--Beltrami operator of
$\bar g$ are
\begin{equation}\label{eq:conformal}
  dA_M = e^{2\lambda}\dA, \qquad
  \nabla_M u = e^{-2\lambda}\nabla u, \qquad
  \Delta_M u = e^{-2\lambda}\Delta_{\sphere} u ,
\end{equation}
the last being the conformal transformation law in dimension two
\cite[Ch.~2]{Jost06}. A conformal diffeomorphism of $\sphere$ pulls
$\bar g$ back to another conformal metric with the same geometry, so
$\lambda$ is determined by the geometry only up to the noncompact
M\"obius group \cite{Jost06}, and an unnormalized representative can
concentrate $w$ arbitrarily. We fix this freedom by Hersch's centering
condition \cite{Hersch70},
\begin{equation}\label{eq:hersch}
  \int_{\sphere} x_i\, w \dA = 0, \qquad i=1,2,3,
\end{equation}
where $x_1,x_2,x_3$ are the ambient coordinate functions restricted to
$\sphere$; a centered representative always exists
\cite{Hersch70,LiYau82}. The remaining additive freedom in $\lambda$ is
a global scaling, which we normalize by
\begin{equation}\label{eq:area}
  \operatorname{Area}(M) \;=\; \int_{\sphere} w \dA \;=\; 4\pi,
\end{equation}
so that computed spectra are comparable with those of the round metric;
the Poisson problem needs neither normalization, and the red blood cell
is run in its own units without \eqref{eq:area}. We write $\Lambda_0 :=
\max_{\sphere} \abs{\lambda}$ and $E := e^{\Lambda_0}$, so that $E^{-2}
\le w \le E^{2}$; constants $C$ are generic and depend at most on the
arguments displayed.

Two forms carry the whole computation. On the round sphere define
\begin{equation}\label{eq:forms}
  a(u,v) := \int_{\sphere} \nabla u \cdot \nabla v \dA,
  \qquad
  b_w(u,v) := \int_{\sphere} w\, u\, v \dA,
  \qquad u,v \in W^{1,2}(\sphere),
\end{equation}
and write $\norm{u}_{\Lqw{2}}^2 := b_w(u,u)$.

\begin{lemma}[Mass and energy of the conformal metric]\label{lem:transplant}
For all $u,v \in W^{1,2}(\sphere)$,
\begin{align}
  \int_M u\,v \dA_M &= b_w(u, v), \label{eq:L2id}\\
  \int_M \nabla_M u \cdot_{\bar g} \nabla_M v \dA_M &= a(u, v),
  \label{eq:dirid}
\end{align}
and $E^{-1}\norm{u}_{L^2(\sphere)} \le \norm{u}_{\Lqw{2}} \le E\,
\norm{u}_{L^2(\sphere)}$. In particular $W^{1,2}(M) = W^{1,2}(\sphere)$
with equivalent norms.
\end{lemma}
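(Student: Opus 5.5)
The plan is to verify both identities pointwise, using the transformation rules \eqref{eq:conformal}, and then integrate. I would work first with smooth $u,v$ and pass to $W^{1,2}(\sphere)$ by density at the end. Both sides of \eqref{eq:L2id} and \eqref{eq:dirid} are continuous bilinear forms on $W^{1,2}(\sphere)$, because $w$ is smooth and bounded by $E^{2}$. So it suffices to prove the identities on $C^\infty(\sphere)$.

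For \eqref{eq:L2id} I substitute $dA_M=e^{2\lambda}\dA=w\dA$ and read off the definition of $b_w$ in \eqref{eq:forms}; nothing else is needed. For \eqref{eq:dirid} there is one point to be careful about: the formula $\nabla_M u=e^{-2\lambda}\nabla u$ gives the $\bar g$-gradient as a tangent vector, and the inner product must be taken in $\bar g$, that is, $X\cdot_{\bar g}Y=e^{2\lambda}X\cdot Y$ for tangent vectors $X,Y$. The integrand is then
\[
e^{2\lambda}\bigl(e^{-2\lambda}\nabla u\bigr)\cdot\bigl(e^{-2\lambda}\nabla v\bigr)\,e^{2\lambda}\dA=\nabla u\cdot\nabla v\dA .
\]
The factors cancel exactly because the surface is two-dimensional. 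This is the conformal invariance of the Dirichlet energy. As a cross-check I would also derive it from $\Delta_M u=e^{-2\lambda}\Delta_{\sphere}u$ and Green's formula on the closed manifold $M$:
\[
\int_M\nabla_Mu\cdot_{\bar g}\nabla_Mv\dA_M=-\int_M v\,\Delta_Mu\dA_M=-\int_{\sphere}v\,\Delta_{\sphere}u\dA=a(u,v).
\]
The main obstacle, modest as it is, is exactly this bookkeeping of the three factors of $e^{\pm2\lambda}$. Keeping $\nabla_M$ as a vector and $\cdot_{\bar g}$ as the metric pairing makes the cancellation transparent.

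For the norm equivalence, I use $E^{-2}\le w\le E^{2}$, which holds because $\abs{\lambda}\le\Lambda_0$. This gives $E^{-2}\int u^2\dA\le b_w(u,u)\le E^{2}\int u^2\dA$, and taking square roots yields the stated bounds.

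Finally, $W^{1,2}(M)$ has the squared norm $\int_M u^2\dA_M+\int_M\abs{\nabla_Mu}_{\bar g}^2\dA_M$. By the two identities this equals $\norm{u}_{\Lqw{2}}^2+a(u,u)$, which by the previous step is comparable to $\norm{u}_{L^2(\sphere)}^2+\abs{u}_{1,2,\sphere}^2$ with constants $E^{\pm2}$. The two spaces are completions of $C^\infty(\sphere)$ under equivalent norms, so they coincide as sets and their norms are equivalent.
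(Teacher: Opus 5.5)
Your proof is correct and follows the paper's argument: the paper reads \eqref{eq:L2id} off $dA_M = w\dA$, gets \eqref{eq:dirid} from the pointwise identity $\bar g(\nabla_M u,\nabla_M v)=e^{-2\lambda}\nabla u\cdot\nabla v$ whose factor cancels against $dA_M=e^{2\lambda}\dA$, and obtains the norm comparison from $E^{-2}\le w\le E^{2}$. Your density reduction, Green's-formula cross-check and explicit comparison of the $W^{1,2}$ norms with constants $E^{\pm2}$ are sound additions that the paper leaves implicit; the density step is not needed, since the pointwise identity already holds almost everywhere for weak gradients.
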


\begin{proof}
\eqref{eq:L2id} is the first identity of \eqref{eq:conformal}. For
\eqref{eq:dirid}, $\bar g(\nabla_M u, \nabla_M v) = e^{-2\lambda}\,
\nabla u \cdot \nabla v$ pointwise by the second identity; multiplying
by $dA_M = e^{2\lambda}\dA$ the factors cancel. This cancellation is
the conformal invariance of the Dirichlet energy, special to dimension
two. The norm comparison is $e^{-2\Lambda_0} \le w \le e^{2\Lambda_0}$.
\end{proof}

The approximation power of the spherical spline spaces
is Theorem~14.26: if $\tri$ covers $\sphere$ with
$\abs{\mathrm{star}(v)} < 1$ for every vertex and $\abs{\tri} \le 1/6$,
if $d \ge 3r+2$, and if $0 \le m \le d$ with $m \equiv d \pmod 2$, then
for every $g \in W^{m+1,2}(\sphere)$ there is $s \in S^r_d(\tri)$ with
\begin{equation}\label{eq:jackson}
  \abs{g - s}_{k,2,\sphere} \;\le\; C\bigl(d, \theta_\tri\bigr)\,
  \abs{\tri}^{\,m+1-k}\, \abs{g}_{m+1,2,\sphere},
  \qquad 0 \le k \le m ,
\end{equation}
where $\theta_\tri$ is the smallest angle of $\tri$ and
$\abs{\cdot}_{m+1,2,\sphere}$ is the seminorm of Section~14.5 of
\cite{LaiSchumaker07}. By Lemma~\ref{lem:transplant} the same $s$
approximates $g$ in the norms of $M$: with $m = d$ and $k = 0, 1$,
\begin{equation}\label{eq:approxM}
  \norm{g - s}_{L^2(M)}
  \;+\; \abs{\tri}\; \abs{g - s}_{1,2,M}
  \;\le\;
  C(d,\theta_\tri)\, E\; \abs{\tri}^{\,d+1}\,
  \abs{g}_{d+1,2,\sphere},
\end{equation}
since the energy seminorm of $M$ equals that of the round sphere by
\eqref{eq:dirid} and the $L^2(M)$ norm is the weighted norm by
\eqref{eq:L2id}. The spline spaces themselves need no change: $S^r_d(\tri)$ is a space of functions on $\sphere$, hence
on $M$, and $C^r$ smoothness, the smoothness matrix $J^{(r)}_0$, the
null-space matrix $Z$, the dimension $N - \rank J^{(r)}_0$, determining
sets and local bases are the same for every $\lambda$, because none of
them involves the metric.

\subsection{Instances: how the conformal factor is obtained}
\label{ssec:computew}\label{ssec:pullback}

\emph{An embedded surface with a conformal parametrization.} Let
$\Sigma \subset \R^3$ be a smooth closed surface of genus zero with its
induced metric, and let $\Phi\colon\sphere\to\Sigma$ be a conformal
diffeomorphism, which exists by uniformization \cite{Jost06}. Then
$\Phi^* g_\Sigma = e^{2\lambda}\gs$ for one $\lambda$, so $\Phi$ is an
isometry from $M$ onto $\Sigma$, and every Laplace--Beltrami problem on
$\Sigma$ is the corresponding problem on $M$ with data $f\circ\Phi$
and solution $u\circ\Phi^{-1}$. The factor is the area magnification
of $\Phi$: at $x\in\sphere$ with orthonormal tangent vectors
$\tau_1,\tau_2$,
\begin{equation}\label{eq:w-general}
  w(x) \;=\; \abs{\,d\Phi_x(\tau_1)\times d\Phi_x(\tau_2)\,}
  \;=\; \sigma_1(x)\,\sigma_2(x),
\end{equation}
where $\sigma_1\ge\sigma_2>0$ are the singular values of $d\Phi_x$, and
conformality is $\sigma_1=\sigma_2$. In colatitude and longitude
$(\Theta,\phi)$,
\begin{equation}\label{eq:w-coords}
  w(\Theta,\phi) \;=\;
  \frac{\abs{\,\partial_\Theta\Phi\times\partial_\phi\Phi\,}}{\sin\Theta},
\end{equation}
which a code evaluates when $\Phi$ is given with its derivatives; with
$\Phi$ alone, centered differences with step $10^{-5}$ reproduced the
geometry diagnostics of the experiments below to about ten digits, and
the ratio $\sigma_1/\sigma_2$ from the same differences measures the
deviation from conformality, assumed here negligible against the
discretization error (an approximately conformal map perturbs the
metric in both forms; its treatment is deferred). The values of $w$
are needed only at quadrature points.

\emph{A surface of revolution.} Let $\Sigma$ be given by a generating
curve $(r(\eta), z(\eta))$, $\eta\in[0,\pi]$, rotated about the
$z$-axis, with $r>0$ on $(0,\pi)$ and the curve meeting the axis
regularly at both ends; the spheroid, the dumbbell and the red blood
cell are of this form. The metric of $\Sigma$ in $(\eta,\phi)$ is
$(r'^2+z'^2)\,d\eta^2 + r^2\,d\phi^2$, and its Mercator variable is
\begin{equation}\label{eq:merc-gen}
  t(\eta) = \ln\tan(\eta/2) + G(\eta), \qquad
  G'(\eta) = \frac{\sqrt{r'^2 + z'^2}}{r} - \frac{1}{\sin\eta},
\end{equation}
where $G'$ extends smoothly to the poles. The round sphere has the same
structure with $G\equiv0$, and matching the two Mercator variables at
the equator defines the conformal map $\Theta\mapsto\eta(\Theta)$
through the monotone scalar equation
\begin{equation}\label{eq:merc-match}
  \ln\tan(\eta/2) + \widetilde G(\eta) = \ln\tan(\Theta/2), \qquad
  \widetilde G = G - G(\pi/2),
\end{equation}
solved by a safeguarded Newton iteration, and $\Phi(\Theta,\phi) =
(r(\eta)\cos\phi,\, r(\eta)\sin\phi,\, z(\eta))$. The factor is
$w = r(\eta)^2/\sin^2\Theta$, which we evaluate in the numerically
stable half-angle form
\begin{equation}\label{eq:w-stable}
  w = \Bigl(\frac{r(\eta)}{\sin\eta}\Bigr)^{2}\, e^{-2\widetilde G(\eta)}
  \Bigl( \frac{1 + T^2}{1 + \tau^2} \Bigr)^{\!2},
  \qquad \tau = \tan(\eta/2), \quad T = \tan(\Theta/2)
  = \tau\, e^{\widetilde G(\eta)},
\end{equation}
which is accurate at the poles. Star-shapedness is not used, so
profiles with concavities are admissible. Equatorial symmetry makes the
Hersch centering \eqref{eq:hersch} automatic, and the exact area
$2\pi\int_0^\pi r\sqrt{r'^2+z'^2}\,d\eta$ is an independent identity
for the computed factor, against which $\int_{\sphere} w\dA$ under the
assembly quadrature is compared as the first check of every run; in the
experiments the agreement is at roundoff. For a radial graph
$\{R(\psi)\,v(\psi,\phi)\}$ over the sphere the construction is the
special case $r = R\sin\psi$, $z = R\cos\psi$, with $G' =
(\sqrt{R^2+R'^2} - R)/(R\sin\psi)$ and prefactor $R(\eta)^2$ in
\eqref{eq:w-stable}; the spheroid is the case with closed-form $G$.

\emph{A prescribed factor.} Any smooth $\lambda$ defines an instance;
the computation needs only $w$ at the quadrature points, and
Section~\ref{sec:numerics} includes one.

\emph{A surface given as a triangle mesh.} The map $\Phi$ must then be
computed, by harmonic energy minimization with M\"obius normalization
\cite{GuYau03} or conformalized mean curvature flow \cite{KSBC12}, and
is only approximately conformal; that instance is the subject of a
companion paper.

\section{The Poisson equation as a weighted problem on the round sphere}
\label{sec:poisson}

The weak form of \eqref{eq:poissonM} is written in the round metric
through the two identities of Lemma~\ref{lem:transplant} and then in
the broken coefficients of Section~\ref{sec:background}, with
conformity imposed through the smoothness matrix. The section closes
with the eigenproblem, which uses the same matrices.

\subsection{The weak form in the round metric}\label{ssec:weakform}

Let $f \in L^2(M)$ with $\int_M f \dA_M = 0$. The weak form of
\eqref{eq:poissonM} is: find $u \in W^{1,2}(M)$ with
$\int_M u \dA_M = 0$ such that
\begin{equation}\label{eq:weakM}
  \int_M \nabla_M u \cdot_{\bar g} \nabla_M v \dA_M \;=\; \int_M f\, v \dA_M
  \qquad \text{for all } v \in W^{1,2}(M).
\end{equation}
By \eqref{eq:dirid} on the left and \eqref{eq:L2id} on the right, and
since $W^{1,2}(M) = W^{1,2}(\sphere)$, this is the problem on the round
sphere: find $u \in W^{1,2}(\sphere)$ with $b_w(u, 1) = 0$ such that
\begin{equation}\label{eq:poissonS}
  a(u, v) \;=\; b_w(f, v)
  \qquad \text{for all } v \in W^{1,2}(\sphere),
\end{equation}
with the forms $a$ and $b_w$ of \eqref{eq:forms}; the compatibility
condition is $b_w(f, 1) = 0$. Three facts organize what follows. The
metric enters only through $w$, on the right-hand side and in the
constraint; $a$ is the Dirichlet form of the round sphere.
The problem is well posed: on $\{v : b_w(v,1) = 0\}$ the form $a$ is
coercive, since $a(v,v) \ge \mu_1(M)\, b_w(v,v)$ there by
Proposition~\ref{prop:exact} below with $\mu_1(M) > 0$, so Lax--Milgram
gives a unique solution, which satisfies \eqref{eq:poissonS} for all
$v$ because both sides vanish at $v = 1$. The strong form is $-\Delta_{\sphere} u = w f$ by
\eqref{eq:conformal}; for a manufactured $u$ the product $w f$ in the
load vector is $-\Delta_{\sphere} u$ and contains no $w$, which the
first experiment of Section~\ref{sec:numerics} uses to separate the
effect of the factor in the load vector from its effect in the
constraint and the norms.

\subsection{Standing assumptions}\label{ssec:splines}

Let $\tri$ be a spherical triangulation with mesh size $\abs{\tri}$
and smallest angle $\theta_\tri$, and $S^r_d(\tri)=\ker J^{(r)}_0$ the
spline space of Section~\ref{ssec:smoothness}. The standing assumptions
are (A1) $\tri$ covers $\sphere$, $\abs{\mathrm{star}(v)} < 1$ for
every vertex and $\abs{\tri} \le 1/6$, the hypotheses under which the
dimension and approximation theory of spherical splines are available
(Theorems~13.43, 13.45, 14.25, 14.26); (A2) $d$ is even and $d \ge
3r+2$ with $r \ge 0$; (A3) $w = e^{2\lambda}$ with $\lambda$ smooth,
normalized by \eqref{eq:hersch} and, except for surfaces run in
physical units, by \eqref{eq:area}. The parity requirement in (A2) is
not technical: $\mathcal B_d$ contains the constants if and only if
$d$ is even, and by Theorem~13.22
\begin{equation}\label{eq:parity}
  \mathcal B_d = \mathcal Y_0 \oplus \mathcal Y_2 \oplus \cdots \oplus
  \mathcal Y_d \qquad (d \text{ even}).
\end{equation}
Even $d$ puts the constant function, which fixes the mean-value
constraint and is the zero-eigenvalue eigenfunction, in the discrete
space; for odd $d$ it is not representable even on a single triangle
(Theorem~13.23). The inequality $d \ge 3r+2$ is needed only for
\eqref{eq:jackson}.

\subsection{The discrete problem in broken coefficients}\label{ssec:pencil}

Set $S_h := S^r_d(\tri) \subset W^{1,2}(\sphere)$; $r = 0$ is already
conforming, and higher $r$ buys a smoother solution at fewer degrees of
freedom. The discrete problem reads: find $u_h \in S_h$ with
$b_w(u_h, 1) = 0$ such that
\begin{equation}\label{eq:poissonh}
  a(u_h, v_h) = b_w(f, v_h)
  \qquad \text{for all } v_h \in S_h .
\end{equation}

We write it in the broken coefficients of Section~\ref{sec:background}.
On each triangle $T$ the element stiffness matrix $K_T$, the element
weighted mass matrix $M_{w,T}$, and the element load vector $F_T$,
indexed by the multi-indices $\alpha,\beta$ of degree $d$, are
\begin{equation}\label{eq:elementmatrices}
\begin{aligned}
  (K_T)_{\alpha\beta}&=\int_T\nabla_{\sphere}B^d_\alpha\cdot\nabla_{\sphere}B^d_\beta\dA,
  &\qquad
  (M_{w,T})_{\alpha\beta}&=\int_Tw\,B^d_\alpha B^d_\beta\dA,\\
  (F_T)_\alpha&=\int_T w\,f\,B^d_\alpha\dA,
\end{aligned}
\end{equation}
with the tangential gradients given by \eqref{eq:tangrad}. The broken
stiffness matrix $K=\operatorname{diag}_TK_T$ and the broken weighted
mass matrix $M_w=\operatorname{diag}_TM_{w,T}$ are block diagonal of
size $N\times N$, one block per triangle, the broken load vector
$F \in \R^N$ is the concatenation of the $F_T$, and for $c,y\in\R^N$
representing $s,v\in S^{-1}_d(\tri)$ one has $y^{\top}Kc=a(s,v)$,
$y^{\top}M_wc=b_w(s,v)$ and $y^\top F = b_w(f, v)$ with the
forms extended triangle by triangle. Let $c_1 \in \R^N$ be the broken
coefficient vector of the constant function $1$. Since $1 \in
\mathcal B_d$ for even $d$ (Theorem~13.23), its B-form on each triangle
is unique, and its entries are not all equal because $b_1+b_2+b_3$ is
not $1$ on the sphere; $c_1$ is computed once per triangulation and
degree by solving the square Bernstein evaluation system at the domain
points of each triangle, which is exact up to roundoff. Writing $J:=J^{(r)}_0$ for the
smoothness matrix, \eqref{eq:poissonh} is the constrained linear
problem
\begin{equation}\label{eq:constrained-poisson}
  c\in\ker J,\qquad c_1^{\top} M_w c = 0, \qquad
  y^{\top}Kc = y^{\top}F \quad\text{for all }y\in\ker J .
\end{equation}
The broken stiffness matrix $K$ is the round-sphere Dirichlet matrix,
assembled once; $M$ enters only through $w$ in $F$ and $M_w c_1$. This
is the constrained Galerkin problem of \cite{Kapita26BCC} and the
constrained B-form setting of \cite{AwanouLaiWenston06}: the broken
matrices are never modified, and conformity is a side condition on the
coefficient vector; for $w \equiv 1$ it is the discretization of
\cite{BarLai06}. It can be solved on the same space by a null-space
matrix, by the augmented Lagrangian iteration of
\cite{AwanouLaiWenston06}, or as a saddle-point system with Lagrange
multipliers \cite[Sec.~6]{Kapita26BCC}. The experiments use the
null-space method \cite[Sec.~6]{BenziGolubLiesen05}, checked against
the saddle-point form in Section~\ref{ssec:checks}.

\paragraph{The null-space method}
Let $Z\in\R^{N\times n_h}$ have full column rank and $\range Z=\ker J$,
so that $n_h=\dim S_h$ and every conforming coefficient vector is
$c=Z\hat c$ for a unique $\hat c\in\R^{n_h}$. Substituting into
\eqref{eq:constrained-poisson} and appending the mean-value constraint
with a scalar multiplier $\nu$ gives the reduced system
\begin{equation}\label{eq:reducedsystem}
  \begin{bmatrix} Z^{\top} K Z & g \\ g^{\top} & 0 \end{bmatrix}
  \begin{bmatrix} \hat c \\ \nu \end{bmatrix}
  \;=\;
  \begin{bmatrix} Z^{\top} F \\ 0 \end{bmatrix},
  \qquad g := Z^{\top} M_w c_1, \qquad c = Z\hat c .
\end{equation}
The reduced stiffness matrix $Z^{\top}KZ$ is symmetric positive
semidefinite with kernel spanned by the reduced coefficients $\hat c_1$
of the constant; since $g^\top \hat c_1 = \int_{\sphere} w \dA > 0$,
the bordered matrix is nonsingular. The multiplier $\nu$ vanishes
whenever the discrete compatibility $c_1^\top F = 0$ holds to roundoff,
and its size checks the quadrature of the right-hand side. In the
language of \cite[Ch.~5]{LaiSchumaker07}, the columns of $Z$ extend the
coefficients on a determining set to all $N$ broken coefficients.

The construction of $Z$ by sparse rank-revealing elimination of $J$
in a locality order, the saddle-point realization of the same
equations, and the reading of residuals in the broken representation
(the raw residual $Kc-F$ is a Lagrange multiplier in $\range J^\top$
and vanishes only after projection by $Z^\top$) are given in
Appendix~\ref{sm:nullspace}.

\subsection{What error to expect}\label{ssec:expected}

The rates observed in Section~\ref{sec:numerics} follow from the
approximation estimate \eqref{eq:approxM} by standard arguments, which
we state with their hypotheses.

\begin{proposition}\label{prop:rates}
Let $w \in C^\infty(\sphere)$ with $0 < w_{\min} \le w \le w_{\max}$,
let (A1) and (A2) hold, and let $u \in W^{d+1,2}(\sphere)$ solve
\eqref{eq:poissonM}. Then the solution $u_h$ of
\eqref{eq:poissonh} satisfies
\begin{equation}\label{eq:H1rate}
  \abs{u - u_h}_{1,2,M} \le C\, \abs{\tri}^{\,d}\,
  \abs{u}_{d+1,2,\sphere},
  \qquad
  \norm{u - u_h}_{L^2(M)} \le C\, \abs{\tri}^{\,d+1}\,
  \abs{u}_{d+1,2,\sphere},
\end{equation}
with $C$ depending on $d$, $\theta_\tri$, $w_{\min}$, $w_{\max}$ and
$\norm{w}_{C^1(\sphere)}$. For the eigenproblem \eqref{eq:evph}, if
$\mu$ is an eigenvalue of \eqref{eq:evpM} with eigenspace
$E \subset W^{d+1,2}(\sphere)$, then the discrete
eigenvalues converging to $\mu$ satisfy $0 \le \mu_h - \mu \le C
\abs{\tri}^{\,2d}$.
\end{proposition}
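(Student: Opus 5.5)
The plan is to run the standard conforming Galerkin argument in the round-sphere formulation \eqref{eq:poissonS}--\eqref{eq:poissonh}, using Lemma~\ref{lem:transplant} to move between the norms of $M$ and of $\sphere$, and \eqref{eq:approxM} for approximation.

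\emph{Energy estimate.} Since $S_h\subset W^{1,2}(\sphere)$ contains the constants (even $d$, \eqref{eq:parity}), subtracting \eqref{eq:poissonh} from \eqref{eq:poissonS} gives the Galerkin orthogonality $a(u-u_h,v_h)=0$ for all $v_h\in S_h$. Because $a$ is an inner product on functions modulo constants, $u_h$ is, up to an additive constant, the $a$-orthogonal projection of $u$; the mean constraint $b_w(u_h,1)=0$ only fixes that constant. Hence $\abs{u-u_h}_{1,2,M}=\abs{u-u_h}_{1,2,\sphere}\le\inf_{s\in S_h}\abs{u-s}_{1,2,\sphere}$, using \eqref{eq:dirid}. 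Now take $s$ from \eqref{eq:jackson} with $m=d$ and $k=1$; this yields the first bound in \eqref{eq:H1rate}.

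\emph{$L^2$ estimate.} I would use Aubin--Nitsche duality in the weighted setting. Let $e=u-u_h$, so that $b_w(e,1)=0$. Let $z$ solve $a(v,z)=b_w(e,v)$ for all $v$, with $b_w(z,1)=0$; this is the problem \eqref{eq:poissonS} with data $e$, whose compatibility condition holds. Its strong form is $-\Delta_{\sphere}z=we$, so elliptic regularity on the round sphere gives $\abs{z}_{2,2,\sphere}\le C\norm{we}_{L^2(\sphere)}\le C w_{\max}E\,\norm{e}_{L^2(M)}$. Then
\[
\norm{e}_{L^2(M)}^2=b_w(e,e)=a(e,z)=a(e,z-s_z).
\]
Choosing $s_z$ from \eqref{eq:jackson} with $m=2$ (allowed since $d$ is even) and $k=1$ gives an extra factor $\abs{\tri}$, which produces the $d+1$ rate. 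Whether the constant needs $\norm{w}_{C^1}$ depends on how the $H^2$ regularity is quoted; I would use $H^2$ regularity of $-\Delta_{\sphere}$, which only needs $w\in L^\infty$. The $C^1$ norm then enters only if one insists on the $M$-Laplacian form.

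\emph{Eigenvalues.} The eigenproblem pairs $a$ with $b_w$, both symmetric, and $b_w$ is equivalent to the $L^2(\sphere)$ inner product. The discrete eigenvalues are min--max values over $S_h$, which gives $\mu_h\ge\mu$ by conformity. For the upper bound I would apply the Babu\v{s}ka--Osborn estimate \cite{BabuskaOsborn91}:
\[
\mu_h-\mu\le C\sup_{\phi\in E,\ \norm{\phi}_{\Lqw2}=1}\ \inf_{s\in S_h}\abs{\phi-s}_{1,2,\sphere}^2 .
\]
This holds because the Rayleigh quotient error equals $a(\phi-P\phi,\phi-P\phi)-\mu\, b_w(\phi-P\phi,\phi-P\phi)$ for the energy projection $P$. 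Since $E$ is finite dimensional and contained in $W^{d+1,2}$, \eqref{eq:jackson} with $k=1$ gives $\abs{\tri}^{2d}$.

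The main obstacle is the dual step. One has to make sure the dual problem with weighted constraint is well posed and has $H^2$ regularity uniformly, and handle the constant mode correctly in the orthogonality. For the eigenvalue bound with multiple eigenvalues, the Babu\v{s}ka--Osborn framework handles the clustering.
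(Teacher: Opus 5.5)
Your energy estimate and your eigenvalue argument are sound and follow the paper. The best-approximation argument in the $a$-seminorm is slightly cleaner than C\'ea on the $w$-mean-zero subspace, since no coercivity constant enters. You are also right that $e=u-u_h$ is already $w$-mean-free.

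\textbf{The gap is in the duality step.} You quote $H^2$ regularity, $\norm{z}_{W^{2,2}}\le C\,w_{\max}\norm{e}_{L^2(\sphere)}$, and then apply \eqref{eq:jackson} with $m=2$, $k=1$. That instance reads $\abs{z-s_z}_{1,2,\sphere}\le C\abs{\tri}^{2}\abs{z}_{3,2,\sphere}$. It needs $z\in W^{3,2}(\sphere)$ and puts $\abs{z}_{3,2}$ on the right, which $L^2$ data $we$ does not control. The power it gives is $\abs{\tri}^2$, not $\abs{\tri}$. So the bound $b_w(e,e)=a(e,z-s_z)\le C\abs{\tri}\,\abs{e}_{1,2}\norm{e}_{\Lqw{2}}$ does not follow from what you wrote.

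There are two consistent repairs.

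\emph{The paper's route.} The paper pairs $H^2$ regularity with $m=1$, bounding $\inf_{z_h}\abs{z-z_h}_{1,2}$ by $C\abs{\tri}\norm{z}_{W^{2,2}}$. This uses only $w_{\min}$ and $w_{\max}$. You were right that \eqref{eq:jackson} as stated excludes $m=1$ for even $d$. So this step really rests on the first-order bound with the full $W^{2,2}$ norm, not on a literal reading of \eqref{eq:jackson}. That bound does hold: in a gnomonic chart, $\mathcal B_d$ is locally $x_3^d$ times all polynomials of degree at most $d$, so there is no parity obstruction.

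\emph{Keeping $m=2$.} Alternatively, keep $m=2$ and pay for it with $H^3$ regularity. Since $e\in W^{1,2}$ and $b_w(e,1)=0$, the weighted Poincar\'e inequality gives $\norm{z}_{W^{3,2}}\le C\norm{we}_{W^{1,2}}\le C\norm{w}_{C^1(\sphere)}\norm{e}_{W^{1,2}}\le C\abs{e}_{1,2,\sphere}$. Hence $\norm{e}_{\Lqw{2}}^2\le \abs{e}_{1,2}\,C\abs{\tri}^2\abs{z}_{3,2}\le C\abs{\tri}^2\abs{e}_{1,2}^2$, that is, $\norm{e}_{L^2(M)}\le C\abs{\tri}\,\abs{e}_{1,2,M}$, and the energy bound finishes.

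This second route respects the parity hypothesis literally. It is also the natural source of the $\norm{w}_{C^1(\sphere)}$ dependence listed in the statement, which the paper's $H^2$ route does not use. Your remark that the $C^1$ norm enters only through the $M$-Laplacian form is therefore wrong for the $m=2$ route you chose.
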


\begin{proof}
The energy estimate is C\'ea's lemma \cite[Ch.~2]{BrennerScott08} on
the $w$-mean-zero subspace, where $a$ is coercive by
Proposition~\ref{prop:exact}, with \eqref{eq:approxM} and
\eqref{eq:dirid}. For the $L^2$ estimate let $e = u - u_h$ made
$w$-mean-free and let $z$ solve $-\Delta_{\sphere} z = w e$,
$b_w(z,1)=0$; elliptic regularity on the closed manifold $\sphere$
\cite[Ch.~5]{Taylor11} gives $\norm{z}_{W^{2,2}} \le C w_{\max}
\norm{e}_{L^2}$. Galerkin orthogonality gives $b_w(e,e) = a(z - z_h,
e)$ for every $z_h \in S_h$, \eqref{eq:jackson} with $m = 1$ bounds
$\inf_{z_h}\abs{z - z_h}_{1,2}$ by $C\abs{\tri}\norm{z}_{W^{2,2}}$,
and dividing by $\norm{e}_{\Lqw{2}} \ge w_{\min}^{1/2}\norm{e}_{L^2}$
and using the energy estimate gives the $L^2$ rate in the weighted
norm, which is the $L^2(M)$ norm by \eqref{eq:L2id}. The eigenvalue statement is the
Babu\v{s}ka--Osborn estimate for conforming approximation of a
symmetric pencil with compact solution operator
\cite[Sec.~7]{BabuskaOsborn91}: the eigenvalue error is bounded by the
square of the best energy approximation of the eigenspace, which
\eqref{eq:approxM} bounds by $\abs{\tri}^{\,d}$.
\end{proof}

The growth of the constants with the conformal factor and with the
position along the spectrum is the subject of a separate paper.

\subsection{The Laplace--Beltrami eigenproblem}
\label{ssec:eigen}

The eigenvalue problem on $M$ is: find $u \not\equiv 0$ and $\mu \in
\R$ with
\begin{equation}\label{eq:evpM}
  -\Delta_M u = \mu\, u \quad \text{on } M,
\end{equation}
understood weakly in $W^{1,2}(M)$, and on the round sphere the
generalized eigenvalue problem for the pencil $(a,b_w)$: find
$u \not\equiv 0$ and $\mu \in \R$ with
\begin{equation}\label{eq:evpS}
  a(u, v) = \mu\, b_w(u, v)
  \qquad \text{for all } v \in W^{1,2}(\sphere).
\end{equation}

\begin{proposition}[The spectrum as a weighted problem]\label{prop:exact}
$(u,\mu)$ is an eigenpair of \eqref{eq:evpM} if and only if it is an
eigenpair of \eqref{eq:evpS}, and no constant in this correspondence
depends on $\lambda$.
\end{proposition}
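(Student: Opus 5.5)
The plan is to reduce the equivalence to the two identities of Lemma~\ref{lem:transplant}, applied to the weak form of \eqref{eq:evpM}, using that the test spaces on the two sides coincide. First I fix what ``understood weakly'' means for \eqref{eq:evpM}: $u\in W^{1,2}(M)$, $u\not\equiv0$, and
\[
\int_M \nabla_M u\cdot_{\bar g}\nabla_M v \dA_M \;=\; \mu\int_M u\,v\dA_M \qquad\text{for all } v\in W^{1,2}(M).
\]
This is the same integration by parts that produced \eqref{eq:weakM} from \eqref{eq:poissonM}, valid on the closed surface $M$ with no boundary terms.

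Next I apply the identities. By \eqref{eq:dirid} the left-hand side equals $a(u,v)$, and by \eqref{eq:L2id} the integral on the right equals $b_w(u,v)$. Both identities hold for all $u,v\in W^{1,2}(\sphere)$. Lemma~\ref{lem:transplant} also gives $W^{1,2}(M)=W^{1,2}(\sphere)$ as sets, so the trial function $u$ and the admissible test functions $v$ range over the same space in both formulations. Hence the weak form of \eqref{eq:evpM} and \eqref{eq:evpS} are literally the same family of equations, with the same $\mu$, and $u$ is the same function on $\sphere$. Nontriviality is also preserved, since $u\not\equiv0$ is a statement about the function itself, and $b_w(u,u)=0$ if and only if $\norm{u}_{L^2(\sphere)}=0$ by the norm equivalence. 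This proves both directions at once.

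For the claim that no constant depends on $\lambda$, I will point out that the identities \eqref{eq:L2id} and \eqref{eq:dirid} are exact equalities. The factor $e^{-2\lambda}$ in $\bar g(\nabla_M u,\nabla_M v)$ cancels the factor $e^{2\lambda}$ in $dA_M$ pointwise. So the correspondence $(u,\mu)\mapsto(u,\mu)$ is the identity map, with no rescaling of $\mu$ or of $u$. The constant $E$ appears only in the norm equivalence, and it is used solely to identify the spaces and detect nontriviality, never in the eigenvalue relation.

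The main point needing care is the space identification. The weak form on $M$ is posed in $W^{1,2}(M)$ defined through $\bar g$, so I must make sure the equality of spaces from Lemma~\ref{lem:transplant} covers both the energy seminorm, which is identical by \eqref{eq:dirid}, and the $L^2$ part, which is equivalent with constants $E^{\pm1}$. That is already stated in the lemma, so I expect no real obstacle beyond recording it. As a by-product I will note that testing with $v=1$ gives $\mu\, b_w(u,1)=0$. Consequently, eigenfunctions with $\mu>0$ are $w$-mean-free and $\mu_1(M)>0$ is the infimum of $a(v,v)/b_w(v,v)$ over $w$-mean-free $v$, which is the coercivity statement used in Section~\ref{ssec:weakform}.
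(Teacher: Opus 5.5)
Your proposal is correct and matches the paper's argument: the paper's proof is the one line ``insert \eqref{eq:L2id} and \eqref{eq:dirid} into the weak form of \eqref{eq:evpM}'', and your remarks on the identification $W^{1,2}(M)=W^{1,2}(\sphere)$ and on nontriviality spell out what that line leaves implicit. Your closing by-product claims more than the test $v=1$ gives: identifying $\mu_1(M)$ with the infimum of $a(v,v)/b_w(v,v)$ over $w$-mean-free $v$, and showing it is positive, also needs the Courant--Fischer principle for the pencil and the fact that $a(v,v)=0$ forces $v$ to be constant, but this lies outside the proposition.
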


\begin{proof}
Insert \eqref{eq:L2id} and \eqref{eq:dirid} into the weak form of
\eqref{eq:evpM}.
\end{proof}

The pencil is symmetric with $b_w$ positive definite, so the spectrum
is real, discrete, $0 = \mu_0 < \mu_1 \le \mu_2 \le \cdots \to \infty$
\cite[Ch.~I]{Chavel84}, with $b_w$-orthogonal eigenfunctions and the
constant eigenfunction at $\mu_0 = 0$. The discrete eigenvalue problem
reads: find $u_h \in S_h \setminus \{0\}$ and $\mu_h \in \R$ with
\begin{equation}\label{eq:evph}
  a(u_h, v_h) = \mu_h\, b_w(u_h, v_h)
  \qquad \text{for all } v_h \in S_h ,
\end{equation}
which in broken coefficients is the constrained pencil
$y^\top K c = \mu_h\, y^\top M_w c$ for all $y \in \ker J$, and with the
null-space matrix the reduced pencil
\begin{equation}\label{eq:matrixpencil}
  \bigl(Z^{\top} K Z\bigr)\, \hat c \;=\; \mu_h\, \bigl(Z^{\top} M_w Z\bigr)\,
  \hat c , \qquad c = Z \hat c ,
\end{equation}
in which $Z^{\top}M_wZ$ is positive definite. The saddle-point form is
the pencil
\begin{equation}\label{eq:saddlepencil}
  \mathcal A x = \mu\, \mathcal M x, \qquad
  \mathcal A=\begin{bmatrix}K&J_b^{\top}\\J_b&0\end{bmatrix},\qquad
  \mathcal M=\begin{bmatrix}M_w&0\\0&0\end{bmatrix};
\end{equation}
its finite eigenvalues with nonzero
primal component are exactly those of \eqref{eq:evph}, and the
multiplier directions contribute only infinite eigenvalues
\cite[Sec.~6]{Kapita26BCC}, so shift-invert Arnoldi iteration
\cite{LSY98} on $(\mathcal A - \sigma\mathcal M)^{-1}\mathcal M$ finds
the constrained spectrum without any null-space matrix. Three
facts follow from the structure. First, since $S_h \subset W^{1,2}(\sphere)$, the Courant--Fischer
principle \cite[Ch.~I]{Chavel84} applied over subspaces of $S_h$ gives
the one-sided bounds
\begin{equation}\label{eq:onesided}
  \mu_{i,h} \;\ge\; \mu_i , \qquad i = 0, 1, \ldots, n_h - 1 .
\end{equation}
Second, since $d$ is even, the constants lie in $S_h$ by
\eqref{eq:global}, and $a(1, v_h) = 0 = \mu\, b_w(1, v_h)$ shows that
$(1, 0)$ is an exact discrete eigenpair, $\mu_{0,h} = 0 = \mu_0$. Third,
the same principle applied to a perturbed weight gives a spectrally
uniform stability estimate: if $\widehat w$ is the weight actually used
and $\delta := \norm{\widehat w/w - 1}_{L^\infty(\sphere)} < 1$, then
$(1-\delta)b_w \le b_{\widehat w} \le (1+\delta)b_w$ as quadratic
forms, and the monotonicity of the Courant--Fischer minimum in the mass
form gives
\begin{equation}\label{eq:weightpert}
  \frac{\mu_i}{1+\delta} \;\le\; \widehat\mu_i \;\le\; \frac{\mu_i}{1-\delta},
  \qquad i \ge 0,
\end{equation}
for the eigenvalues $\widehat\mu_i$ of $(a, b_{\widehat w})$; the
same argument covers quadrature errors in $K$ and $M_w$. Errors in the
weight and the quadrature therefore enter the spectrum as relative
errors uniform along the spectrum. Convergence at the rate $\abs{\tri}^{2d}$ is
Proposition~\ref{prop:rates}, with no geometric consistency term, in
contrast with the surface finite element eigenvalue theory of
\cite{BonitoDemlowOwen18}. One structural statement is specific to the
sphere and serves as an implementation test.

\begin{proposition}[Parity fingerprint]\label{prop:parity}
Let $\lambda \equiv 0$, so that $w \equiv 1$ and \eqref{eq:evph} is the
discrete eigenvalue problem of the round metric, and let $d$ be even.
Then for every even $\ell$ with $0 \le \ell \le d$, every spherical
harmonic $Y \in \mathcal Y_\ell$ is an exact discrete eigenfunction with
exact eigenvalue $\ell(\ell+1)$:
\begin{equation*}
  a(Y, v_h) = \ell(\ell+1)\, b_1(Y, v_h)
  \qquad \text{for all } v_h \in S_h ,
\end{equation*}
so the discrete spectrum contains $\ell(\ell+1)$ with multiplicity at
least $2\ell + 1$, and for $\abs{\tri}$ small enough the discrete
eigenvalue cluster converging to $\ell(\ell+1)$ has total multiplicity
exactly $2\ell+1$. The eigenvalues of odd order $\ell$ converge from
above at the rate $\abs{\tri}^{2d}$.
\end{proposition}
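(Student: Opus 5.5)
The plan is to exploit that for even $d$ the space $\mathcal B_d = \mathcal Y_0\oplus\mathcal Y_2\oplus\cdots\oplus\mathcal Y_d$ of \eqref{eq:parity} lies inside $S_h$ by \eqref{eq:global}. Spherical harmonics are exact eigenfunctions of the continuous problem, so for them the Galerkin condition holds trivially.

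\emph{Exactness.} Fix an even $\ell\le d$ and $Y\in\mathcal Y_\ell$. Since $Y$ is smooth and $-\Delta_{\sphere}Y=\ell(\ell+1)Y$, Green's formula on the closed sphere gives $a(Y,v)=\ell(\ell+1)\,b_1(Y,v)$ for every $v\in W^{1,2}(\sphere)$. This holds in particular for every $v_h\in S_h\subset W^{1,2}(\sphere)$. Because $Y\in\mathcal B_d\subset S_h$, the pair $(Y,\ell(\ell+1))$ solves \eqref{eq:evph} exactly. The $(2\ell+1)$-dimensional space $\mathcal Y_\ell$ therefore lies in the discrete eigenspace of $\ell(\ell+1)$, and the multiplicity is at least $2\ell+1$. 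No quadrature enters this argument beyond the exact integrals. In the implementation the statement holds to roundoff provided the quadrature integrates the polynomial integrands exactly, since $w\equiv1$.

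\emph{Exact cluster multiplicity.} The exact eigenvalue $\ell(\ell+1)$ of the round metric has multiplicity exactly $2\ell+1$. I will invoke the Babu\v{s}ka--Osborn spectral convergence already used in Proposition~\ref{prop:rates}. It applies to a conforming approximation in which $\inf_{v_h\in S_h}\|v-v_h\|_{1,2}\to0$, which \eqref{eq:jackson} guarantees as $\abs{\tri}\to0$. Under this convergence the total multiplicity of the discrete eigenvalues in a small disc around an isolated eigenvalue equals the continuous multiplicity once the mesh is fine enough. Combined with the lower bound from the exactness step, the cluster is exactly $\ell(\ell+1)$ repeated $2\ell+1$ times. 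One fixes a radius separating $\ell(\ell+1)$ from its neighbours $(\ell-1)\ell$ and $(\ell+1)(\ell+2)$, and then the bound holds for $\abs{\tri}$ below a threshold.

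\emph{Odd orders.} For odd $\ell$ the harmonics are not in $\mathcal B_d$, and odd-degree parts do not lie in $S_h$ globally. They are still approximable piecewise, since $S^r_d$ contains the approximants of \eqref{eq:jackson}. Here I simply combine the one-sided bound \eqref{eq:onesided}, which gives convergence from above, with Proposition~\ref{prop:rates} at $w\equiv1$. The eigenspace $\mathcal Y_\ell$ consists of smooth functions, hence lies in $W^{d+1,2}(\sphere)$, and the proposition yields the rate $\abs{\tri}^{2d}$.

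The main obstacle is the exact-multiplicity claim. It requires quoting the cluster form of the Babu\v{s}ka--Osborn theory, namely gap convergence of the spectral projections, rather than only the eigenvalue rate. It also requires checking that the hypothesis of compact solution operator convergence in norm follows from \eqref{eq:jackson} together with elliptic regularity on $\sphere$, which I would verify via the standard Aubin--Nitsche bound $\|T-T_h\|_{L^2\to L^2}\le C\abs{\tri}^2$ (in fact any positive power suffices).
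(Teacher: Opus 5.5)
Your proposal is correct and follows the paper's proof. The inclusion $\mathcal Y_\ell\subset\mathcal B_d\subset S_h$ from \eqref{eq:parity} and \eqref{eq:global} makes $(Y,\ell(\ell+1))$ an exact solution of \eqref{eq:evph}, the $2\ell+1$ independent harmonics give the lower bound, and the Babu\v{s}ka--Osborn cluster count makes it exact; your odd-order argument via \eqref{eq:onesided} and Proposition~\ref{prop:rates} spells out what the paper leaves implicit.

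One correction to your implementation aside: the radially projected integrands carry odd negative powers of $\norm{u}$. They are therefore not polynomials in the reference coordinates, and the product Gauss rule is not exact on them, which is why the paper claims the even clusters only up to quadrature error.
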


\begin{proof}
By \eqref{eq:parity} and \eqref{eq:global}, $\mathcal Y_\ell \subset
\mathcal B_d \subset S_h$ for even $\ell \le d$, and $-\Delta_{\sphere}
Y = \ell(\ell+1) Y$ for $Y \in \mathcal Y_\ell$
\cite[Ch.~2]{AtkinsonHan12}, so $\bigl(Y, \ell(\ell+1)\bigr)$ satisfies
\eqref{eq:evph} verbatim; the $2\ell+1$ harmonics of degree $\ell$ are
independent in $S_h$, which gives the multiplicity bound. By
\cite[Sec.~7]{BabuskaOsborn91}, for $\abs{\tri}$ small enough
exactly $2\ell+1$ discrete eigenvalues, counted with multiplicity, lie
in a neighborhood of $\ell(\ell+1)$ that excludes the other eigenvalues
of the sphere, and these are the cluster.
\end{proof}

At $d = 6$ the discrete spectrum of the round metric therefore
contains $0$, $6$, $20$ and $42$ with multiplicities $1$, $5$, $9$ and
$13$ up to quadrature error, while the clusters at $2$, $12$ and $30$
converge at twelfth order;
a quadrature defect perturbs the even clusters, a constraint-rank
defect changes the multiplicities, and an assembly error destroys the
rate. For every $\lambda$, Hersch's inequality
$\mu_1(M)\operatorname{Area}(M) \le 8\pi$ \cite{Hersch70} and its
extension $\mu_k(M)\operatorname{Area}(M) \le 8\pi k$ \cite{KNPP21}
bound the eigenvalues from above; since the discrete eigenvalues lie
above the exact ones by \eqref{eq:onesided}, a normalized discrete
eigenvalue exceeding $2k$ signals underresolution or a failure of the
normalization \eqref{eq:hersch}.

\section{The algorithm}\label{sec:algorithm}

Algorithm~\ref{alg:poisson} lists the computation in execution order.
The conformal factor is obtained and checked before any equation is
discretized, and the smoothness, null-space and stiffness matrices are
built once per triangulation and degree and reused for every
$\lambda$.

\begin{algorithm}[htbp]
\caption{Poisson equation for a conformal metric on the sphere by spherical splines}
\label{alg:poisson}
\begin{algorithmic}[1]
\REQUIRE the conformal factor $w = e^{2\lambda}$, as a formula, or as
a conformal parametrization $\Phi$ or a generating curve from which it
is computed; right-hand side $f$; spherical triangulation $\tri$; degree
$d$ (even) and smoothness $r$
\ENSURE broken coefficient vector $c$ of the spline solution $u_h\approx u$
\STATE \textbf{Geometry.} Build the routine $x\mapsto w(x)$: for a
surface of revolution solve \eqref{eq:merc-match} for $\eta(\Theta)$
and evaluate \eqref{eq:w-stable}; for a parametrized surface evaluate
\eqref{eq:w-coords}; for a prescribed $\lambda$ evaluate $e^{2\lambda}$.
Check $\int_{\sphere} w\dA$ against the exact area when one is known
and the Hersch moments \eqref{eq:hersch} against zero.
\STATE \textbf{Spline space (once per $\tri,d,r$).} Assemble the edge
blocks \eqref{eq:LS-smoothness} into $J$; eliminate to reduced row
echelon form in the locality order and form $Z$ by \eqref{eq:rref};
compare $N-\rank J$ with the closed-form counts of Appendix~\ref{sm:depcounts}.
\STATE \textbf{Stiffness (once per $\tri,d$).} Assemble the broken
$K=\operatorname{diag}_T K_T$ from \eqref{eq:elementmatrices} with the
tangential gradient \eqref{eq:tangrad}; form and store $Z^\top K Z$.
\STATE \textbf{Weighted data.} At the quadrature points of each
triangle evaluate $w$, $f$ (for an embedded surface, $f\circ\Phi$) and
the Bernstein basis;
assemble the load vector $F$ and the constraint vector $M_w c_1$ of
\eqref{eq:elementmatrices}, where $c_1$ interpolates the constant.
Check $c_1^\top F$ against zero.
\STATE \textbf{Solve.} Solve the bordered system \eqref{eq:reducedsystem}
by a sparse direct solver; set $c = Z\hat c$. Check the multiplier
$\nu$ against zero and the projected residual $Z^\top(Kc - F)$ against
zero.
\STATE \textbf{Output.} Evaluate $u_h$ from $c$ by de Casteljau on any
triangle; for an embedded surface the value at $p\in\Sigma$ is
$u_h(\Phi^{-1}(p))$. Errors are measured in $L^2(M)$ and $H^1(M)$
through \eqref{eq:L2id} and \eqref{eq:dirid}.
\end{algorithmic}
\end{algorithm}

The formulas each step evaluates, the Newton iteration and half-angle
form for the conformal factor of a surface of revolution, the radial
projection of the reference quadrature rule, the quadrature formulas
for the load and constraint vectors, the solver diagnostics, and the
cost of a new surface or a new right-hand side are given in
Appendix~\ref{sm:steps}. $J$, $Z$ and $Z^\top K Z$ are computed once per $(\tri,d,r)$; a new
$\lambda$ costs one pass over the quadrature points and one
factorization of \eqref{eq:reducedsystem}; a new right-hand side costs one pass and
one triangular solve; the eigenproblem costs one assembly of $M_w$ and
one Lanczos run.

\section{Numerical experiments}\label{sec:numerics}

The smoothness matrix $J$, the null-space matrix $Z$ and the stiffness
matrix $K$ are reused unchanged across instances; the metric is supplied
through the conformal factor of step~1 of Algorithm~\ref{alg:poisson}. Local matrices use a conical Gauss product rule of order $12$ per
direction on each spherical triangle. The bordered system \eqref{eq:reducedsystem} is solved by a
sparse direct solver, and the reduced pencil \eqref{eq:matrixpencil} by
shift-inverted Lanczos at $\sigma=-1$. Icosahedral level $\ell$ has $20\cdot4^{\ell}$ triangles and geodesic
mesh sizes $0.628$, $0.326$, $0.165$, $0.083$ at levels $1$ to $4$; the
degrees are $(d,r) = (2,0)$, $(4,0)$ and $(6,1)$, the last the smallest
even degree admitted by $d \ge 3r+2$ for $C^1$ splines.

The instances are the round metric, $\lambda \equiv 0$; three surfaces
of revolution, the prolate spheroid $x^2+y^2+(z/c)^2 = 1$ with $c =
3/2$, the dumbbell $R(\psi) = 1 - \tfrac34\sin^2\psi$ with two bulbs of
radius one joined by a neck of radius $1/4$, and the Evans--Fung red
blood cell \cite{EvansFung72}, a biconcave discocyte run in physical
units (micrometers), whose factors are computed from the generating
curves by \eqref{eq:merc-gen}, with the spheroid's Mercator integral in
closed form \eqref{eq:mercator}; and a prescribed factor with no
embedding, $\lambda = 0.6\,(x_1^2 - x_2^2) - 0.4\,x_3^2$, whose
symmetry satisfies \eqref{eq:hersch}. The three surfaces of revolution
pass the area identity to relative error below $3\times10^{-15}$;
$\osc\lambda$ is $0$, $0.535$, $2.23$, $0.373$ and $1.2$ respectively,
and the dumbbell is the hard case, where $e^{2\osc\lambda} \approx 86$
enters the constants of \eqref{eq:approxM} (Table~\ref{tab:geomdiag}).

\subsection{Algebraic checks}\label{ssec:checks}

The mesh, quadrature, smoothness matrix, null-space matrix and solvers
are checked independently of the metric: Euler relations, quadrature
area, rank and dependency count of $J$, reproduction of the constant
and of $xy$, agreement of the null-space and saddle-point realizations
to $6\times10^{-14}$, one-sidedness \eqref{eq:onesided} to
$3\times10^{-13}$, and product-to-moment assembly \cite{AinsworthAD11}
against direct quadrature to $10^{-15}$ (Appendix~\ref{sm:checks}).

\subsection{The Poisson equation}\label{ssec:num-poisson}

Two tests separate the two places where the metric enters. In Test 1
the exact solution is $u = e^{x_1}x_2 + x_1x_3^3$, not a spherical
polynomial of any degree, with $f = -\Delta_M u =
-w^{-1}\Delta_{\sphere}u$ computed symbolically; the product $wf$ in
the load vector is then $-\Delta_{\sphere}u$, so the discrete solutions
for the different instances differ only through the constraint vector
$M_w c_1$ and the weighted norm in which the error is measured, and
Test 1 isolates the spline approximation from the metric. In Test 2
the right-hand side is prescribed on the embedded surface, $f(p) =
p_1$ made mean-free, so the load vector sees the metric at every
quadrature point; its reference is a dense spherical-harmonic Galerkin
solve of \eqref{eq:poissonS}. Errors are measured through \eqref{eq:L2id} and \eqref{eq:dirid}
with the assembly quadrature. Table~\ref{tab:poisson1} reports
Test 1 and Table~\ref{tab:poisson2main} the degree-$4$ and $6$ rows of
Test 2, whose full record is in Appendix~\ref{sm:test2}.

\begin{table}[htbp]
\centering
\caption{Poisson problem, Test 1: exact solution $u = \tilde u \circ \Phi^{-1}$ with $\tilde u = e^{x_1} x_2 + x_1 x_3^3$ on the sphere, right-hand side $f = -\Delta_M u$, icosahedral levels $1$ to $4$. The $H^1(M)$ seminorm error is the same on all four surfaces, because the load vector $\int_{\sphere} w \tilde f B_\alpha \dA = -\int_{\sphere} (\Delta_{\sphere} \tilde u) B_\alpha \dA$ does not contain $w$; the $L^2(M)$ error sees $w$ through the constant fixed by the weighted mean and through the weighted norm, and is listed per surface. Rates are against the geodesic mesh size; the $L^2(M)$ rates on the spheroid, dumbbell and red blood cell differ from those of the sphere by at most $0.16$ at level $2$ and by less than $0.03$ at levels $3$ and $4$. The red blood cell is in micrometers.}
\label{tab:poisson1}
\scriptsize
\setlength{\tabcolsep}{1.5pt}
\begin{tabular}{@{}llr ll ll lll@{}}
\toprule
& & & \multicolumn{2}{c}{$H^1(M)$, all surfaces} & \multicolumn{2}{c}{$L^2(M)$, sphere} & \multicolumn{3}{c}{$L^2(M)$, other surfaces} \\
\cmidrule(lr){4-5}\cmidrule(lr){6-7}\cmidrule(lr){8-10}
$(d,r)$ & lev. & $\dim S_h$ & error & rate & error & rate & spheroid & dumbbell & red blood cell \\
\midrule
$(2,0)$ & 2 & 642 & $4.59\,10^{-2}$ & 2.04 & $1.75\,10^{-3}$ & 3.04 & $2.09\,10^{-3}$ & $1.33\,10^{-3}$ & $5.56\,10^{-3}$ \\
 & 3 & 2562 & $1.16\,10^{-2}$ & 2.02 & $2.24\,10^{-4}$ & 3.02 & $2.69\,10^{-4}$ & $1.72\,10^{-4}$ & $7.11\,10^{-4}$ \\
 & 4 & 10242 & $2.92\,10^{-3}$ & 2.01 & $2.82\,10^{-5}$ & 3.02 & $3.39\,10^{-5}$ & $2.18\,10^{-5}$ & $8.96\,10^{-5}$ \\
\midrule
$(4,0)$ & 2 & 2562 & $1.70\,10^{-4}$ & 4.19 & $3.70\,10^{-6}$ & 5.20 & $4.46\,10^{-6}$ & $2.77\,10^{-6}$ & $1.17\,10^{-5}$ \\
 & 3 & 10242 & $1.07\,10^{-5}$ & 4.06 & $1.17\,10^{-7}$ & 5.07 & $1.42\,10^{-7}$ & $8.84\,10^{-8}$ & $3.71\,10^{-7}$ \\
 & 4 & 40962 & $6.70\,10^{-7}$ & 4.03 & $3.69\,10^{-9}$ & 5.04 & $4.45\,10^{-9}$ & $2.78\,10^{-9}$ & $1.16\,10^{-8}$ \\
\midrule
$(6,1)$ & 2 & 3206 & $1.40\,10^{-6}$ & 6.04 & $2.57\,10^{-8}$ & 6.92 & $3.25\,10^{-8}$ & $2.38\,10^{-8}$ & $7.98\,10^{-8}$ \\
 & 3 & 12806 & $2.49\,10^{-8}$ & 5.92 & $2.48\,10^{-10}$ & 6.82 & $3.14\,10^{-10}$ & $2.30\,10^{-10}$ & $7.69\,10^{-10}$ \\
 & 4 & 51206 & $4.22\,10^{-10}$ & 5.94 & $2.25\,10^{-12}$ & 6.84 & $2.82\,10^{-12}$ & $1.98\,10^{-12}$ & $7.03\,10^{-12}$ \\
\bottomrule
\end{tabular}
\end{table}

\begin{table}[htbp]
\centering
\caption{Poisson problem, Test 2: right-hand side $f(p)=p_1$ prescribed on the physical surface, so that $w$ enters the load vector at every quadrature point; degrees $4$ and $6$ (degree $2$ and the reference details are in Appendix~\ref{sm:test2}). Errors in $L^2(M)$ and the $H^1(M)$ seminorm with rates in parentheses; the red blood cell is in micrometers. The dumbbell reference is converged only to about $10^{-7}$, which limits its last rows.}
\label{tab:poisson2main}
\footnotesize
\setlength{\tabcolsep}{3pt}
\begin{tabular}{llrrll}
\toprule
geometry & $(d,r)$ & level & $\dim S_h$ & $L^2(M)$ (rate) & $H^1(M)$ (rate) \\
\midrule
sphere & $(4,0)$ & 2 & 2562 & $1.64\times10^{-6}$ (5.21) & $7.50\times10^{-5}$ (4.19) \\
 & $(4,0)$ & 3 & 10242 & $5.20\times10^{-8}$ (5.05) & $4.72\times10^{-6}$ (4.05) \\
 & $(6,1)$ & 2 & 3206 & $9.75\times10^{-9}$ (6.96) & $5.63\times10^{-7}$ (6.07) \\
\midrule
spheroid & $(4,0)$ & 2 & 2562 & $2.44\times10^{-6}$ (5.04) & $9.69\times10^{-5}$ (4.10) \\
 & $(4,0)$ & 3 & 10242 & $8.39\times10^{-8}$ (4.94) & $6.35\times10^{-6}$ (3.99) \\
 & $(6,1)$ & 2 & 3206 & $3.50\times10^{-8}$ (6.32) & $1.36\times10^{-6}$ (5.60) \\
\midrule
dumbbell & $(4,0)$ & 2 & 2562 & $1.45\times10^{-5}$ (4.30) & $3.75\times10^{-4}$ (3.44) \\
 & $(4,0)$ & 3 & 10242 & $4.05\times10^{-7}$ (5.23) & $2.32\times10^{-5}$ (4.07) \\
 & $(6,1)$ & 2 & 3206 & $8.46\times10^{-7}$ (4.95) & $2.79\times10^{-5}$ (4.23) \\
\midrule
red blood cell & $(4,0)$ & 2 & 2562 & $2.27\times10^{-4}$ (5.14) & $3.29\times10^{-3}$ (4.13) \\
 & $(4,0)$ & 3 & 10242 & $7.57\times10^{-6}$ (4.98) & $2.15\times10^{-4}$ (3.99) \\
 & $(6,1)$ & 2 & 3206 & $4.16\times10^{-6}$ (6.63) & $6.47\times10^{-5}$ (5.70) \\
\bottomrule
\end{tabular}
\end{table}

In Test 1 the $H^1(M)$ rates over the last refinement are $2.0$,
$4.0$ and $5.9$ for $d = 2, 4, 6$, the same to the digits shown across
the instances, and the $L^2(M)$ rates are $3.0$, $5.0$ and $6.8$, with
the $d=6$ errors reaching $2\times10^{-12}$ in $L^2(M)$ at $51206$
unknowns; the conformal factor enters the $L^2(M)$ errors as a
multiplicative constant within the bound $E$ of
Lemma~\ref{lem:transplant} and nothing else, and the prescribed factor
behaves like the embedded surfaces (Appendix~\ref{sm:test2}). In Test 2, where the weight is in the load vector, the rates on
the sphere, spheroid and red blood cell are again $d$ and $d+1$ to two
digits at $d = 2, 4$ and $5.6$ to $6.1$ in energy at $d = 6$; on the
dumbbell the finest $(6,1)$ row and one $(2,0)$ row fall short by an
amount within the accuracy of the spherical-harmonic reference, as
discussed in the appendix. The multiplier $\nu$ is below $10^{-13}$ and the projected
residual below $10^{-12}$ in every row.

\emph{Comparison with surface finite elements.} Test 1 was repeated on
the spheroid and the dumbbell with parametric surface finite elements
\cite{Dziuk88,Demlow09}: Dziuk's polyhedral $P_1$ method and
isoparametric $P_2$ and $P_3$ elements on the same icosahedral
triangulations pushed to $M$ by $\Phi$, every Lagrange node on the
exact surface, so that the geometric error is the smallest the
parametric method admits at that degree; errors are measured on the
discrete surface with the lifted exact solution.
Table~\ref{tab:compare} reports errors and the wall-clock time of one
Python implementation for both methods at the finest level run. At
equal degrees of freedom the $C^0$ spline of degree $2$ and the $P_2$
element are within a factor of two; the spline of degree $4$ has an
$L^2(M)$ error an order of magnitude below that of $P_3$ at $10^4$
unknowns; and the $C^1$ spline of degree $6$ reaches $3\times10^{-10}$
where $P_3$ reaches $10^{-6}$ with twice the unknowns. The finite element errors on
the dumbbell are two to three times those on the spheroid at the same
level, consistent with the larger geometric approximation error on the
more strongly curved surface, whereas the spline energy errors agree on the two instances to the
digits shown, as expected when the stiffness matrix is independent of
the metric and the map carries the geometry. In time, the per-instance
work of the spline at a given number of unknowns is comparable to $P_1$
and two to three times $P_2$, the difference being the order-$12$
product quadrature of the weighted load vector; the metric-independent
setup is amortized over instances. The ratios at the finest
configurations are in Table~\ref{tab:compare}.

\begin{table}[htbp]
\centering
\caption{Conformal spherical splines against parametric surface finite elements (Dziuk $P_1$, isoparametric $P_2$, $P_3$, nodes on the exact surface) for Poisson Test 1 on the spheroid and the dumbbell: finest level run for each degree, errors in $L^2(M)$ and the $H^1(M)$ seminorm with, in parentheses, the convergence rate over the last refinement, and wall-clock seconds. For the splines, ``setup is the geometry-independent work (smoothness matrix, null-space matrix, round-sphere stiffness), done once per triangulation and degree and reused for every surface; ``surface is the per-surface work (weight at the quadrature points, load and constraint vectors, bordered solve). For the FEM, ``setup is the isoparametric mesh and ``surface is assembly plus solve. Same machine, single thread, quadrature of order $12$ (splines) and $6$ (FEM).}
\label{tab:compare}
\footnotesize
\setlength{\tabcolsep}{3pt}
\begin{tabular}{llrllrr}
\toprule
geometry & method & dof & $L^2(M)$ & $H^1(M)$ & setup (s) & surface (s) \\
\midrule
spheroid & spline $(2,0)$ & 10242 & $3.39\,10^{-5}$ (3.00) & $2.92\,10^{-3}$ (2.00) & 2.3 & 12.4 \\
 & spline $(4,0)$ & 40962 & $4.45\,10^{-9}$ (5.01) & $6.70\,10^{-7}$ (4.01) & 3.1 & 14.4 \\
 & spline $(6,1)$ & 12806 & $3.14\,10^{-10}$ (6.79) & $2.49\,10^{-8}$ (5.90) & 1.5 & 5.5 \\
 & FEM $P_1$ & 10242 & $1.44\,10^{-3}$ (2.00) & $9.20\,10^{-2}$ (1.00) & 0.8 & 12.9 \\
 & FEM $P_2$ & 40962 & $4.77\,10^{-6}$ (3.00) & $9.29\,10^{-4}$ (2.00) & 1.1 & 15.1 \\
 & FEM $P_3$ & 23042 & $1.05\,10^{-6}$ (4.02) & $1.53\,10^{-4}$ (3.00) & 0.4 & 4.4 \\
\midrule
dumbbell & spline $(2,0)$ & 10242 & $2.17\,10^{-5}$ (3.00) & $2.92\,10^{-3}$ (2.00) & 2.2 & 24.2 \\
 & spline $(4,0)$ & 40962 & $2.78\,10^{-9}$ (5.01) & $6.70\,10^{-7}$ (4.01) & 3.2 & 25.6 \\
 & spline $(6,1)$ & 12806 & $2.30\,10^{-10}$ (6.80) & $2.49\,10^{-8}$ (5.90) & 1.4 & 8.4 \\
 & FEM $P_1$ & 10242 & $1.00\,10^{-3}$ (1.99) & $1.43\,10^{-1}$ (1.00) & 0.8 & 25.4 \\
 & FEM $P_2$ & 40962 & $3.26\,10^{-6}$ (3.06) & $2.55\,10^{-3}$ (1.99) & 1.2 & 27.7 \\
 & FEM $P_3$ & 23042 & $2.14\,10^{-6}$ (4.04) & $6.42\,10^{-4}$ (2.96) & 0.5 & 7.4 \\
\bottomrule
\end{tabular}
\end{table}

\subsection{The eigenproblem on the round sphere: the parity fingerprint}
\label{ssec:num-sphere}

Table~\ref{tab:sphere} reports the discrete spectra on the round sphere
($w \equiv 1$) against the exact eigenvalues $\ell(\ell+1)$ with
multiplicities $2\ell + 1$, split into the even-$\ell$ and odd-$\ell$
clusters of Proposition~\ref{prop:parity}. The prediction is realized
exactly: the even clusters are reproduced to at worst
$2.5 \times 10^{-13}$, independent of the mesh, at all three degrees,
while the odd clusters converge from above with measured rates $4.02$,
$8.35$, and $11.95$ against the expected $2d = 4, 8, 12$ of
Section~\ref{ssec:eigen}. At $d = 6$ the exact values $0, 6, 20, 42$
appear with multiplicities $1, 5, 9, 13$. At level $1$ the vertex
stars have diameter about $1.26$, violating $\abs{\mathrm{star}(v)} <
1$ in (A1), and the fingerprint and rates hold regardless.

\begin{table}[htbp]
\centering
\caption{Round sphere. Maximum eigenvalue errors over the even-$\ell$ and
odd-$\ell$ clusters (first $16$ eigenvalues for $d = 2, 4$; first $50$
for $d = 6$), and the measured convergence rate of the odd clusters
against the theoretical $2d$. The even clusters are exact up to
quadrature and roundoff at every mesh, as Proposition~\ref{prop:parity}
predicts.}
\label{tab:sphere}
\begin{tabular}{cccrccc}
\toprule
$d$ & $r$ & level & $\dim S^r_d$ & even-$\ell$ err & odd-$\ell$ err &
rate ($2d$) \\
\midrule
2 & 0 & 1 & 162  & $1.7\times 10^{-14}$ & $4.94\times 10^{-2}$ & \\
2 & 0 & 2 & 642  & $3.4\times 10^{-14}$ & $3.39\times 10^{-3}$ & 4.09 \\
2 & 0 & 3 & 2562 & $2.0\times 10^{-14}$ & $2.18\times 10^{-4}$ & 4.02 \\
\midrule
4 & 0 & 1 & 642  & $1.7\times 10^{-14}$ & $6.83\times 10^{-6}$ & \\
4 & 0 & 2 & 2562 & $1.8\times 10^{-14}$ & $2.88\times 10^{-8}$ & 8.35 \\
\midrule
6 & 1 & 1 & 806  & $1.5\times 10^{-12}$ & $4.29\times 10^{-6}$ & \\
6 & 1 & 2 & 3206 & $2.5\times 10^{-13}$ & $1.71\times 10^{-9}$ & 11.95 \\
\bottomrule
\end{tabular}
\end{table}

\subsection{The eigenproblem on the curved surfaces}
\label{ssec:num-spheroid}\label{ssec:num-rbc}\label{ssec:num-dumbbell}

For the spheroid the Mercator matching \eqref{eq:merc-match} reads
\begin{equation}\label{eq:mercator}
  \ln \tan(\theta/2) + G_0(\theta) = \ln \tan(\Theta/2),
  \qquad
  G_0(\theta) = \int_{\pi/2}^{\theta}
  \frac{\sqrt{\cos^2 t + c^2 \sin^2 t\,} - 1}{\sin t}\, dt,
\end{equation}
the isometric latitude of the Mercator projection of an ellipsoid, and $\int_{\sphere} w \dA$ agrees with the closed-form
area $2\pi + 2\pi c \arcsin(e)/e$, $e = \sqrt{1 - 1/c^2}$, to relative
error $8.4 \times 10^{-16}$. None of the three spectra has an
elementary closed form, so the reference is a dense Galerkin solve of
the same pencil in the real spherical-harmonic basis, in which the
zonal weight couples only equal azimuthal orders; its self-convergence
is given in Table~\ref{tab:curved}. The normalized first
eigenvalues $\mu_1 \operatorname{Area}/(4\pi)$ are $1.5514$, $0.780$
and $1.784$, all below the Hersch bound $2$; on the red blood cell the equatorial pair $\mu_{1,2} =
0.16715\,\mu\mathrm{m}^{-2}$ lies below the axial mode $\mu_3 =
0.23963$, so in-plane variation is cheaper than top-to-bottom variation
on the flat discocyte.

Table~\ref{tab:curved} gives the rates. On the spheroid
($\osc\lambda = 0.535$) they are $3.98$, $7.99$ and $10.62$ for $d =
2, 4, 6$, the last at an error of $2 \times 10^{-8}$ where the
quadrature of the weight is not yet negligible. On the red blood cell
($\osc\lambda = 0.373$) they are $3.98$, $8.02$ and $10.65$ with
terminal errors $1.3 \times 10^{-10}$ and $3.4 \times 10^{-11}$ at $d =
4, 6$. On the dumbbell ($\osc\lambda = 2.23$) the coarse errors are orders of
magnitude above the spheroid values at identical dimensions and the
rates climb toward $2d$ with refinement ($3.94$ at $d = 2$; $6.31$ then $7.00$ at $d = 4$;
$9.03$ at $d = 6$ from one pair of levels), consistent with the
expected order and a pre-asymptotic range that widens with
$\osc\lambda$.
One-sidedness holds to $6 \times 10^{-14}$ throughout.
Figure~\ref{fig:modes} shows computed eigenfunctions of the three
embedded instances.

\begin{figure}[htbp]
\centering
\begin{minipage}{0.24\textwidth}\centering
\includegraphics[width=\textwidth]{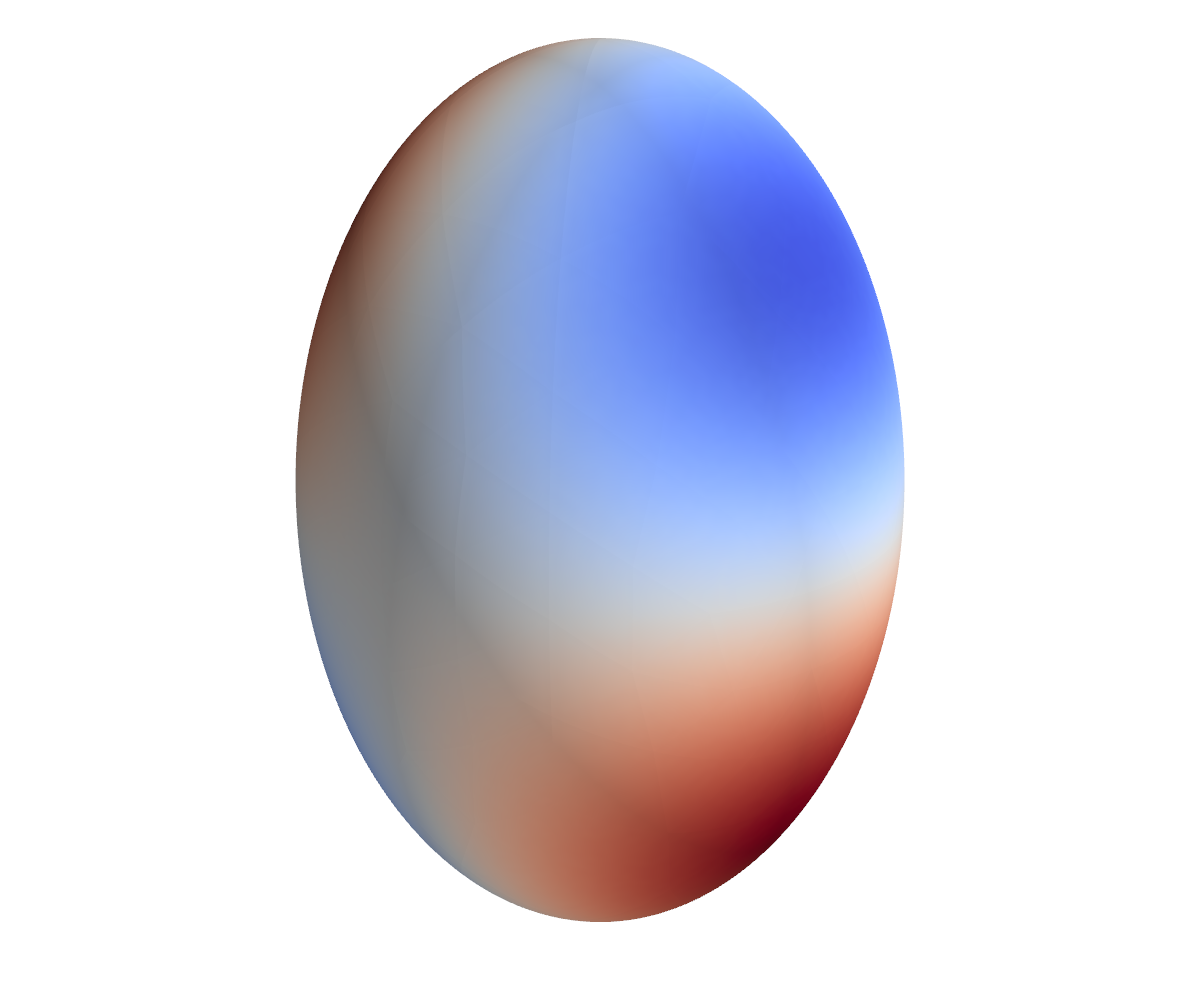}
\end{minipage}\hfill
\begin{minipage}{0.24\textwidth}\centering
\includegraphics[width=\textwidth]{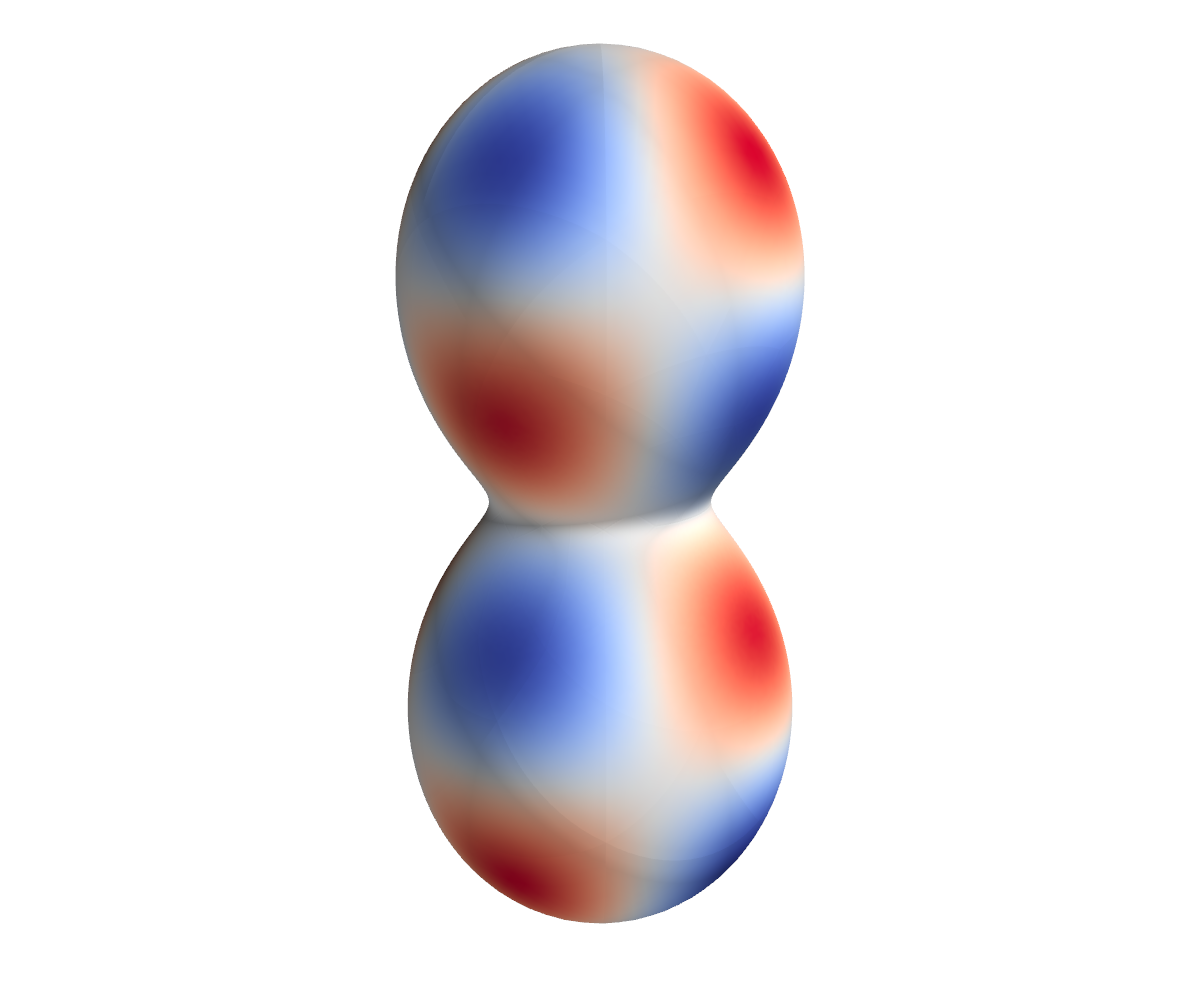}
\end{minipage}\hfill
\begin{minipage}{0.24\textwidth}\centering
\includegraphics[width=\textwidth]{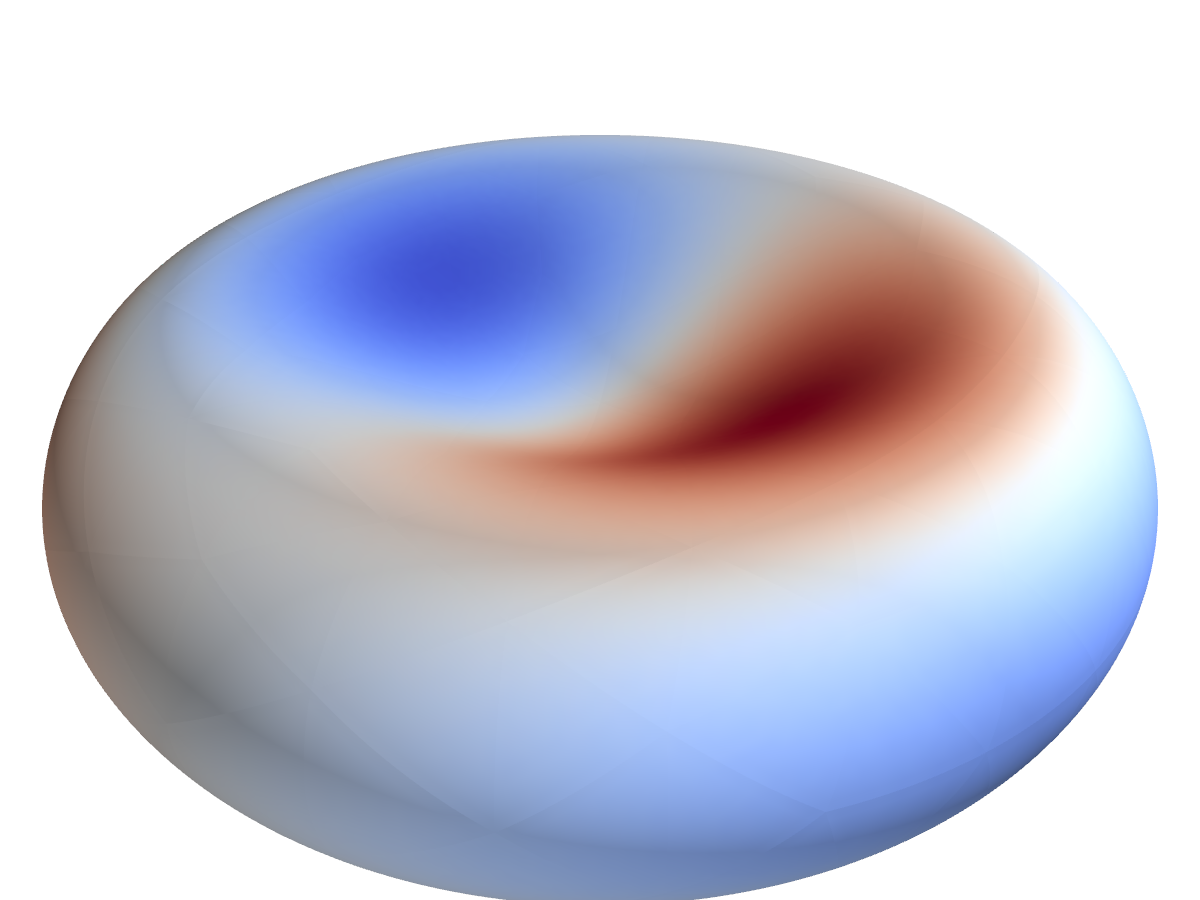}
\end{minipage}\hfill
\begin{minipage}{0.24\textwidth}\centering
\includegraphics[width=\textwidth]{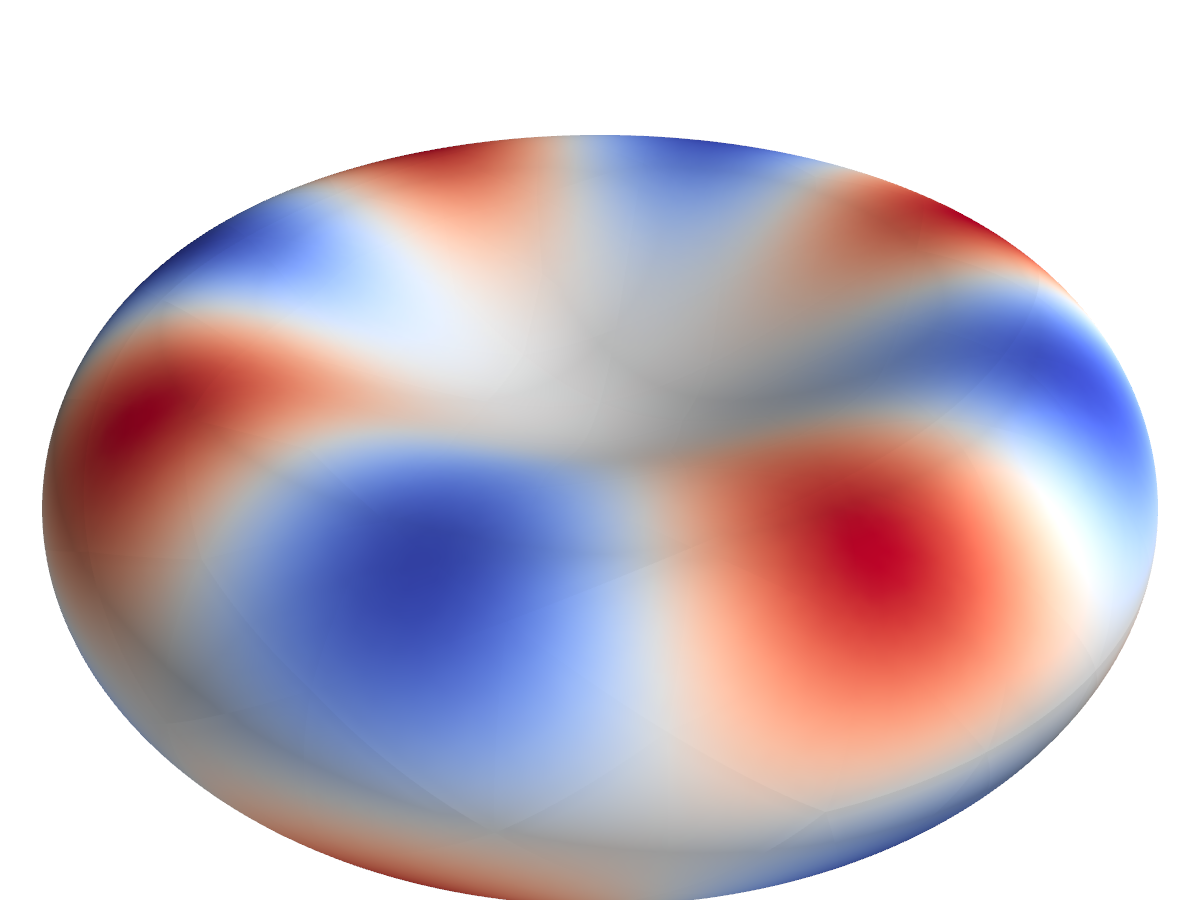}
\end{minipage}
\caption{Computed eigenfunctions at $d = 6$, $r = 1$, level $2$, drawn
on the embedded surfaces. From left: spheroid, seventh eigenfunction,
$\mu = 5.38$; dumbbell, $\mu = 57.62$, with azimuthal and axial nodal
lines crossing both bulbs; red blood cell, a mode localized in the
dimples, $\mu = 1.28\,\mu\mathrm{m}^{-2}$; red blood cell, an azimuthal
rim mode, $\mu = 2.66\,\mu\mathrm{m}^{-2}$. Sign and scale are
arbitrary.}
\label{fig:modes}
\end{figure}

\begin{table}[htbp]
\centering
\caption{Eigenvalues on the curved surfaces (levels $2$ and $3$; level $1$ in Appendix~\ref{sm:test2}). Maximum eigenvalue error of
the spline pencil against the spherical-harmonic reference (first $16$
eigenvalues; first $30$ at $d = 6$ on the spheroid) and measured rates
against $2d$. Reference self-convergence: $9.4 \times 10^{-13}$
(spheroid), $8.4 \times 10^{-8}$ (dumbbell), $2.2 \times 10^{-13}$
(red blood cell, in $\mu\mathrm{m}^{-2}$).}
\label{tab:curved}
\small
\begin{tabular}{lcccrcc}
\toprule
geometry & $d$ & $r$ & level & $\dim S^r_d$ & max error & rate ($2d$) \\
\midrule
spheroid & 2 & 0 & 2 & 642  & $4.47\times 10^{-3}$ & 4.27 \\
 & 2 & 0 & 3 & 2562 & $2.94\times 10^{-4}$ & 3.98 \\
 & 4 & 0 & 2 & 2562 & $3.63\times 10^{-7}$ & 7.99 \\
 & 6 & 1 & 2 & 3206 & $1.95\times 10^{-8}$ & 10.62 \\
\midrule
dumbbell & 2 & 0 & 2 & 642   & $1.28\times 10^{0}$  & 2.63 \\
 & 2 & 0 & 3 & 2562  & $8.68\times 10^{-2}$ & 3.94 \\
 & 4 & 0 & 2 & 2562  & $4.64\times 10^{-3}$ & 6.31 \\
 & 4 & 0 & 3 & 10242 & $3.88\times 10^{-5}$ & 7.00 \\
 & 6 & 1 & 2 & 3206  & $3.62\times 10^{-5}$ & 9.03 \\
\midrule
red blood cell & 2 & 0 & 2 & 642   & $5.84\times 10^{-4}$  & 4.26 \\
 & 2 & 0 & 3 & 2562  & $3.85\times 10^{-5}$  & 3.98 \\
 & 4 & 0 & 2 & 2562  & $3.10\times 10^{-8}$  & 7.70 \\
 & 4 & 0 & 3 & 10242 & $1.29\times 10^{-10}$ & 8.02 \\
 & 6 & 1 & 2 & 3206  & $3.35\times 10^{-11}$ & 10.65 \\
\bottomrule
\end{tabular}
\end{table}

\section{Discussion and conclusions}\label{sec:discussion}

For a conformal metric on the sphere the area element is the conformal
factor times the round one and the Dirichlet energy is the round
energy, so the stiffness matrix is the round-sphere matrix for every
such metric and the geometry is one scalar at the quadrature points of
the load vector, the constraint vector and the weighted mass matrix. By
uniformization every smooth closed genus-zero surface is such a metric,
and an embedded surface with a conformal parametrization, a surface of
revolution, a prescribed factor and a meshed surface are instances that
differ only in how the factor is obtained. The factor is checked against
an exact area where one exists before any equation is discretized, the
$C^r$ spline spaces serve every metric as they stand, and errors in the
metric are measured in the round one with no change of constants in
energy. The experiments show the rates $d+1$ in $L^2(M)$ and $d$ in
energy for the Poisson problem and $2d$ for the eigenvalues on all
instances, reproduction of the even-order eigenvalues of the round
metric to roundoff,
and, on
the dumbbell with $\osc\lambda = 2.23$, larger constants and a longer
pre-asymptotic range with the limiting orders unchanged.

Relative to surface finite elements
\cite{Dziuk88,DziukElliott13,BonitoDemlowNochetto20}, the method
replaces a sequence of physical meshes by a fixed spherical mesh plus
one scalar function. It is restricted to genus zero, needs the conformal
factor as input, and a metric with strong conformal concentration maps
large regions into small spherical patches that a uniform mesh
under-resolves; in return, when the factor is known exactly, the
geometry is low-dimensional, computed to the accuracy of the area
check, and, in
the cases run here, at degrees four and six the errors are smaller
than those of isoparametric cubic elements at comparable numbers of
unknowns.
Adaptive refinement driven by the conformal factor is the next step.
Two matters are deferred to separate papers: the constants
in \eqref{eq:H1rate} and in the eigenvalue estimate as functions of the
conformal factor and the position along the spectrum, and surfaces
given only as triangle meshes, for which the conformal factor must
itself be computed.

\section*{Acknowledgments}
Claude Fable 5.1 (Anthropic) assisted with drafting, editing, and code;
the mathematics, numerical results, final text, and source verification
are the author's responsibility.

\appendix
\section*{Appendix}
The appendix collects the implementation details and checks referred to
in the main text: the dependency count of the smoothness matrix, the
construction of the null-space matrix and the saddle-point realization,
the formulas evaluated by each step of the algorithm, the algebraic
checks that precede the experiments, and the second Poisson test.

\section{Dependencies among the smoothness rows}\label{sm:depcounts}

\begin{figure}[htbp]
\centering
\begin{tikzpicture}[scale=1.05]
\draw[thick] (0,0) -- (3,0) -- (1.5,2.2) -- cycle;
\draw[thick] (0,0) -- (3,0) -- (1.5,-2.2) -- cycle;
\node at (1.5,2.5) {$T$};
\node at (1.5,-2.5) {$\widetilde T$};
\node[left] at (0,0) {$v_2$};
\node[right] at (3,0) {$v_3$};
\node[above] at (1.5,2.2) {$v_1$};
\node[below] at (1.5,-2.2) {$v_4$};
\foreach \p in {(0,0),(1,0),(2,0),(3,0),(0.5,0.7333),(1.5,0.7333),
                (2.5,0.7333),(1.0,1.4667),(2.0,1.4667),(1.5,2.2)}
  \fill \p circle (2.2pt);
\foreach \p in {(0.5,-0.7333),(1.5,-0.7333),(2.5,-0.7333),
                (1.0,-1.4667),(2.0,-1.4667),(1.5,-2.2)}
  \fill \p circle (2.2pt);
\foreach \p in {(1,0),(2,0),(1.5,0.7333),(1.5,-0.7333)}
  \draw[very thick] \p circle (5.2pt);
\node[right, align=left] at (3.5,1.2) {\small four shared domain points,\\ \small two coefficients each};
\draw[thin] (3.45,1.15) -- (2.05,0.12);
\end{tikzpicture}
\caption{Domain points of two spherical triangles sharing an edge,
$d=3$. Every domain point on the shared edge carries two coefficients,
one from $T$ and one from $\widetilde T$; the $C^0$ rows
\eqref{eq:C0-rows} set their differences to zero. One $C^1$ row
\eqref{eq:C1-rows} ties the four circled coefficients through the
spherical barycentric coordinates $(\beta_1,\beta_2,\beta_3)$ of $v_4$
relative to $T$. The stencil is the planar one; the geometry of the
sphere enters only through the three numbers $\beta_m$.}
\label{fig:stencil}
\end{figure}

The rows of the smoothness matrix $J^{(r)}_0$ are linearly dependent,
since the conditions around a vertex are related. Two counts are elementary. The $C^0$ rows around a vertex of valence $n$ match the
$n$ coefficients at that vertex in a cycle, which has rank $n-1$, so the
$C^0$ rows carry exactly $N_V$ dependencies. The $C^1$ rows of the first
layer around a vertex match the first-order jets of the $n$ incident
polynomials, which by \eqref{eq:bern-deriv} are affine in the vertex
coefficient and the two adjacent edge coefficients; the jets form a
three-dimensional space, so these $n$ rows have rank $n-2$ and
contribute two further dependencies at every vertex whose incident
edges span the tangent plane. Section~\ref{ssec:checks} confirms
both counts on the meshes used.

The dimension $N-\rank J^{(r)}_0$ of the spline space can be compared with closed-form counts.
For $r=0$ the dimension is the
number of domain points of $\tri$, one per vertex, $d-1$ per edge and
$\binom{d-1}{2}$ per triangle,
\begin{equation}\label{eq:dimS0}
  \dim S^0_d(\tri)=N_V+(d-1)N_E+\binom{d-1}{2}N_T ,
\end{equation}
for every $d\ge1$. For $r=1$ and $d\ge5$, on a spherical triangulation
whose vertices each lie on at least three great circles, so that no
vertex is singular, Theorem~13.45 gives
\begin{equation}\label{eq:dimS1}
  \dim S^1_d(\tri)=\binom d2N_E-\Bigl(\binom{d+2}2-3\Bigr)N_V+2\binom{d+2}2
  =\binom{d-1}2N_T+6 ,
\end{equation}
the two forms being equal by \eqref{eq:euler}; the constant $6$ is three
times the Euler characteristic of the sphere. Both identities are
checked against the computed ranks in Appendix~\ref{sm:checks}.

\section{The null-space matrix, the saddle-point form, and residuals}
\label{sm:nullspace}

\paragraph{Construction of the null-space matrix}\label{ssec:nullspace}
The null-space matrix is built once per triangulation and degree by
sparse Gaussian elimination of the smoothness matrix $J$ to reduced row
echelon form, in a column order chosen for locality: the triangles are
visited breadth-first from a root triangle, and within a triangle the
domain points nearest the edge through which it was entered come first.
The elimination is rank revealing \cite[Ch.~5]{GolubVanLoan13}, with a
pivot tolerance of $10^{-8}$ relative to the unit entries of $J$, so
dependent rows, which occur
around every vertex, and any extra dimensions at singular vertices are
handled without case analysis. Let $D$ be the set of pivot columns and
$F$ the set of free columns, and let $\Pi$ be the permutation that
orders the columns as $(D,F)$. After elimination,
\begin{equation}\label{eq:rref}
  EJ\Pi=\begin{bmatrix}I&R\\0&0\end{bmatrix},\qquad
  Z=\Pi\begin{bmatrix}-R\\I\end{bmatrix},\qquad
  JZ=0,\qquad \abs F=N-\rank J ,
\end{equation}
where $E$ is the product of the row operations and the identity block
of $Z$ sits on the free columns. The free coefficients $F$ are a
determining set for $S^r_d(\tri)$: prescribing them determines all
pivot coefficients through $R$. The construction is combinatorial in
the mesh and metric only through the coordinates $\beta_m$ of
\eqref{eq:C1-rows}, so $Z$ is the same matrix for every surface $M$
represented on $\tri$, and in the experiments it is computed once per
$(\tri,d,r)$ and reused for all geometries. Figure~\ref{fig:pipeline}
shows how the objects fit together.

\begin{figure}[htbp]
\centering
\begin{tikzpicture}[
  box/.style={draw, thick, rounded corners, minimum height=1.25cm, minimum width=3.1cm, align=center, font=\small},
  note/.style={font=\footnotesize, align=center},
  arr/.style={->, very thick},
  node distance=1.3cm and 0.8cm]
\node[box] (tri) {triangulation $\tri$\\ degree $d$, order $r$};
\node[box, right=of tri] (J) {smoothness matrix\\ $J=J^{(r)}_0$, $m\times N$\\ edge blocks \eqref{eq:LS-smoothness}};
\node[box, right=of J] (Z) {null-space matrix\\ $Z$, $N\times n_h$\\ $\range Z=\ker J$};
\node[box, below=of tri] (w) {geometry data\\ $\Phi$, $w=e^{2\lambda}$};
\node[box, right=of w] (KM) {broken matrices\\ $K=\operatorname{diag}_TK_T$\\ $M_w=\operatorname{diag}_TM_{w,T}$};
\node[box, right=of KM] (pen) {reduced pencil\\ $Z^{\top}KZ\,\hat c=\mu_hZ^{\top}M_wZ\,\hat c$\\ $c=Z\hat c$};
\draw[arr] (tri) -- (J);
\draw[arr] (J) -- (Z);
\draw[arr] (tri) -- (KM);
\draw[arr] (w) -- (KM);
\draw[arr] (KM) -- (pen);
\draw[arr] (Z) -- (pen);
\node[note, above=0.08cm of J] {independent of the surface $M$};
\node[note, below=0.08cm of KM] {$w$ enters $M_w$ only; $K$ is the round-sphere matrix};
\end{tikzpicture}
\caption{The objects of the discretization and their dependencies. The
smoothness matrix $J$ and the null-space matrix $Z$ depend on the
triangulation, the degree and the smoothness order only. The broken
stiffness matrix $K$ is the round-sphere matrix for every genus-zero
surface. The surface enters through the weight $w$ in the broken
weighted mass matrix $M_w$ alone.}
\label{fig:pipeline}
\end{figure}

\paragraph{Rank tolerance and sparsity} The dimension of the
approximation space depends on the pivot tolerance of the elimination,
so its effect is tested. Table~\ref{tab:tolsparse} (top) repeats the
$C^1$, $d=6$ construction at level $1$ with tolerances $10^{-6}$ to
$10^{-12}$: the rank, the dimension and the first eigenvalues of the
round sphere agree to all printed digits, so the results are
insensitive to the tolerance over this range. Table~\ref{tab:tolsparse} (bottom)
gives the sparsity of $Z$ and of the reduced stiffness matrix
$Z^\top K Z$ for levels $1$ to $4$: the number of nonzeros per column
of $Z$ is $6$ and the number per row of $Z^\top K Z$ is $63$,
independent of the level over these four levels, so the reduction
generates no growth in fill, and
$Z^\top K Z$ has fewer nonzeros than the broken matrix $K$ ($4.0
\times 10^6$ at level $4$). The construction of $Z$ takes under two
seconds at level $4$ in the Python implementation.

\begin{table}[htbp]
\centering
\caption{$C^1$ splines of degree $6$. Top: rank of $J$, dimension
$N-\rank J$, and the first nonzero eigenvalue and the first eigenvalue
of the second cluster of the round sphere at level $1$, against the
pivot tolerance $\tau$ of the elimination. Bottom: size and sparsity
of the null-space matrix and of the reduced stiffness matrix by level.}
\label{tab:tolsparse}
\footnotesize
\begin{tabular}{lcccc}
\toprule
$\tau$ & $\rank J$ & $\dim S^1_6$ & $\mu_{1,h}$ & $\mu_{4,h}$ \\
\midrule
$10^{-6}$ & 1434 & 806 & 2.0000000008593 & 6.0000000000000 \\
$10^{-8}$ & 1434 & 806 & 2.0000000008593 & 6.0000000000000 \\
$10^{-10}$ & 1434 & 806 & 2.0000000008593 & 6.0000000000000 \\
$10^{-12}$ & 1434 & 806 & 2.0000000008593 & 6.0000000000000 \\
\bottomrule
\end{tabular}

\medskip
\begin{tabular}{rrrrrrrr}
\toprule
level & $N$ & $n_h$ & $\operatorname{nnz}(Z)$ & $\operatorname{nnz}(Z^\top K Z)$ & per col. of $Z$ & per row of $Z^\top KZ$ & $t_Z$ (s) \\
\midrule
1 & 2240 & 806 & 4744 & 49766 & 5.9 & 61.7 & 0.03 \\
2 & 8960 & 3206 & 19084 & 201030 & 6.0 & 62.7 & 0.12 \\
3 & 35840 & 12806 & 76724 & 805958 & 6.0 & 62.9 & 0.49 \\
4 & 143360 & 51206 & 308324 & 3225414 & 6.0 & 63.0 & 1.90 \\
\bottomrule
\end{tabular}
\end{table}

\paragraph{The saddle-point form and the broken residual}
With a full-row-rank $J_b$ spanning the row space of $J$ and Lagrange
multipliers $\lambda$ for the smoothness rows and $\nu$ for the mean
value, \eqref{eq:constrained-poisson} is also the symmetric indefinite
system with block matrix $[K, J_b^\top, M_w c_1;\ J_b, 0, 0;\
c_1^\top M_w, 0, 0]$, solved directly without any null-space matrix;
the two realizations agree to roundoff (Section~\ref{ssec:checks}).
For the solution $c$ of either, the broken residual $Kc - F + \nu M_w
c_1$ is orthogonal to $\ker J$, hence
\begin{equation}\label{eq:residual-multiplier}
  Kc - F + \nu M_w c_1 = -J^{\top}\lambda\qquad\text{for some }\lambda\in\R^m ,
\end{equation}
because $(\ker J)^\perp=\range J^{\top}$. The multiplier $\lambda$ is
not zero in general, since single-triangle test functions are not
conforming and their interelement flux terms do not cancel. The raw
residual is therefore $O(1)$ for the exact discrete solution and
vanishes only after projection by $Z^{\top}$ or subtraction of its
component in $\range J^{\top}$; every residual check in this paper is
evaluated in one of these two ways.

\section{The steps of the algorithm in detail}\label{sm:steps}

\paragraph{Step 1: compute and accept the conformal factor} The factor
is computed by \eqref{eq:w-coords} for a general conformal map and by
\eqref{eq:merc-gen}--\eqref{eq:w-stable} for a surface of revolution.
The equation \eqref{eq:merc-match} is solved for $\eta$ at each
quadrature colatitude $\Theta$ by a safeguarded Newton iteration with
bisection as fallback, which converges to roundoff in a few steps, and
the factor is evaluated in the half-angle form \eqref{eq:w-stable}.
Before assembly we record the relative area defect against the exact
area, the Hersch residual $\max_i \abs{\int_{\sphere} x_i w \dA}$, and
the oscillation $\osc\lambda = \tfrac12(\ln\max w - \ln\min w)$. The
first two decide whether $w$ is accepted; the third predicts where a
uniform spherical mesh will under-resolve, since a region of $M$ with
area fraction $\theta$ maps to a spherical region of area fraction of
the order $\theta/w$.

\paragraph{Step 3: the stiffness matrix} Quadrature on a
spherical triangle is the radial projection of a reference plane rule:
a flat node $u$ maps to $v = u/\norm{u}$ with weight
$2 A_{\mathrm{flat}}\, h_n\, \norm{u}^{-3}$ times the reference weight,
where $A_{\mathrm{flat}}$ is the area of the flat triangle and $h_n$
its distance from the origin; the factor $2 A_{\mathrm{flat}}\, h_n\,
\norm{u}^{-3}$ is the exact surface Jacobian of the radial projection. The element matrices $K_T$ use the tangential gradients
\eqref{eq:gradmatrix}, \eqref{eq:tangrad}; $K$ contains no $w$, and
$Z^\top K Z$ is formed and stored once per $(\tri, d, r)$. Both $K_T$
and $M_{w,T}$ can be assembled by the product-to-moment identities of
\cite{AinsworthAD11}, which reproduce direct quadrature to roundoff
(Section~\ref{ssec:checks}).

\paragraph{Step 4: the weighted data} On each triangle, at the
quadrature nodes $v_q$ with weights $W_q$, evaluate the Bernstein basis
$B^d_\alpha(v_q)$ by de Casteljau, the factor $w(v_q)$ from Step 1,
and $f(v_q) = f(\Phi(v_q))$; then
\begin{equation}\label{eq:loadquad}
\begin{aligned}
  (F_T)_\alpha &\approx \sum_q W_q\, w(v_q)\, f(v_q)\, B^d_\alpha(v_q),\\
  (M_{w,T} c_{1,T})_\alpha &\approx \sum_q W_q\, w(v_q)\,
  \Bigl(\sum_\beta (c_{1,T})_\beta B^d_\beta(v_q)\Bigr) B^d_\alpha(v_q),
\end{aligned}
\end{equation}
where the inner sum is the constant $1$ up to roundoff, $c_1$ being
the exact B-form of the constant. For
a manufactured $u$ the product $wf$ is
$-\Delta_{\sphere}u$ and the load vector does not see $w$; for
a right-hand side given on $M$ the product $w(v_q) f(\Phi(v_q))$ is
where the metric enters. The discrete compatibility $c_1^\top F$ is
the quadrature of $\int_M f\dA_M$ and is checked against zero. For the
eigenproblem the full $M_{w,T}$ is assembled by the same rule.

\paragraph{Step 5: solve} Form $g = Z^\top M_w c_1$ and $b = Z^\top F$
and solve \eqref{eq:reducedsystem}, a symmetric system of size $n_h+1$
with the pattern of $Z^\top K Z$ plus one dense row and column, by a
sparse direct solver; set $c = Z\hat c$. Record the multiplier $\nu$
and the projected residual $\norm{Z^\top(Kc - F + \nu M_w
c_1)}/\norm{Z^\top F}$, both at roundoff for a direct solve; the raw
residual $Kc - F$ is not a diagnostic, by
\eqref{eq:residual-multiplier}. For the eigenproblem, solve
\eqref{eq:matrixpencil} by shift-invert Lanczos \cite{LSY98} with a
small negative shift; the one-sidedness \eqref{eq:onesided} is
monitored in every experiment.

\paragraph{Cost} With $Z$ and $Z^\top K Z$ computed once, a new conformal factor
costs one pass over the quadrature points for $w$ and $f$ and
one factorization of the bordered matrix; a new right-hand side on the
same surface costs one pass and one triangular solve; the eigenproblem
costs one assembly of $M_w$ and one Lanczos run per surface. Of the
checks in Algorithm~\ref{alg:poisson}, a failure of the area identity
or the Hersch bound is repaired in Step 1, and a failure of
reproduction or of the parity fingerprint of
Proposition~\ref{prop:parity} in Steps 2 to 5.

\section{Algebraic checks}\label{sm:checks}

\begin{table}[htbp]
\centering
\caption{Geometry diagnostics of the four surfaces, recorded before any solve. The
area residual compares the represented area with an independent area
calculation.}
\label{tab:geomdiag}
\small
\setlength{\tabcolsep}{4pt}
\begin{tabular}{p{0.13\linewidth}p{0.25\linewidth}p{0.25\linewidth}p{0.25\linewidth}}
\toprule
geometry & map / data & area check & concentration \\
\midrule
sphere & exact & rel. $<3\times10^{-15}$ & $\osc\lambda=0$ \\
spheroid & analytic & rel. $8.4\times10^{-16}$ & $\osc\lambda=0.535$ \\
dumbbell & revolution & rel. $2.2\times10^{-15}$ & $\osc\lambda=2.23$ \\
RBC & generating curve & at roundoff & $\osc\lambda=0.373$ \\
\bottomrule
\end{tabular}
\end{table}

The Euler relations \eqref{eq:euler} hold on every mesh and the
quadrature area of the sphere is $4\pi$ to $3 \times 10^{-15}$. At
level $1$ ($N_V=42$, $N_E=120$, $N_T=80$) the smoothness matrix for
$d=2$, $r=0$ has $360$ rows and rank $318$, and for $d=6$, $r=1$ it
has $1560$ rows and rank $1434$; the $42$ and $126 = 42 + 84$
dependencies are one, respectively one plus two, per vertex, as
counted in Section~\ref{ssec:smoothness}. The dimensions $N - \rank J$
are $162$, $642$, $806$ at level $1$ and $642$, $2562$, $3206$ at
level $2$ for $S^0_2$, $S^0_4$, $S^1_6$, in agreement with
\eqref{eq:dimS0} and \eqref{eq:dimS1}; the sparse rank agrees with a
dense singular value decomposition, and the interpolated constant lies
in $\range Z$ to $10^{-14}$. The constant and the harmonic $xy$
interpolate at the domain points and satisfy every smoothness row to
$7 \times 10^{-14}$; the constant lies in the kernel of $Z^{\top}KZ$ to
$10^{-14}$ with mass $4\pi$ to $4 \times 10^{-15}$. For $xy$ with
eigenvalue $6$ the broken residual $Kc_{xy}-6M c_{xy}$ has relative
norm $1.0$ in every configuration while its projection by $Z^{\top}$
is below $6.7\times10^{-14}$, as \eqref{eq:residual-multiplier}
states. The reduced pencil \eqref{eq:matrixpencil} and the saddle-point
pencil \eqref{eq:saddlepencil}, both solved by shift-invert Arnoldi at
$\sigma=-1$, give the first sixteen eigenvalues of the round sphere at
level $1$ to $6\times10^{-14}$ at $d = 2, 4, 6$, so $Z$ spans exactly
$\ker J$. The quantity $\min_i (\mu_{i,h} - \mu_i)$ stayed above
$-3 \times 10^{-13}$ in every experiment, as \eqref{eq:onesided}
requires up to roundoff and reference error. Assembly through the
product-to-moment identities of \cite{AinsworthAD11} reproduces direct
quadrature to $10^{-15}$ and runs three to five times faster.

\section{Poisson Test 2 and the convergence plot of Test 1}\label{sm:test2}

In Test 2 of Section~\ref{ssec:num-poisson} the right-hand side is prescribed
on the physical surface, $f(p) = p_1$ made mean-free, so the load
vector sees the surface at every quadrature point. Its reference is a
dense Galerkin solve of the transplanted problem in the
spherical-harmonic sector $\overline P^1_\ell(\cos\Theta)\cos\phi$,
$\ell \le L = 80$, to which the zonal weight and the azimuthal order of
$f$ confine the problem; its coefficients are converged to $10^{-12}$
on the sphere, spheroid and red blood cell, and to about $10^{-7}$ on
the dumbbell, whose concentrated weight needs many more harmonics.
Table~\ref{tab:poisson2} reports the results. On the sphere, spheroid
and red blood cell the rates are $d$ and $d+1$ to two digits at
$d = 2, 4$ and $5.6$ to $6.1$ in energy at $d = 6$. On the dumbbell the
rates at $(2,0)$ level $3$ ($2.15$, $1.64$) and $(6,1)$ level $2$
($4.95$, $4.23$) fall short; the errors there, $7.7\times10^{-5}$ and
$8.5\times10^{-7}$, are within a factor of a few of the reference
accuracy, so the deficit is attributable to the reference, and the
$(4,0)$ sequence on the same surface shows $5.23$ and $4.07$.
Figure~\ref{fig:poisson-conv} plots Test 1 against the mesh size.

\begin{table}[htbp]
\centering
\caption{Poisson problem, Test 2: right-hand side $f(p) = p_1$, the first Cartesian coordinate of the point $p \in M$, so that $w$ enters the load vector; reference solution from a spherical-harmonic Galerkin solve with $L = 80$, converged to $10^{-12}$ in the coefficients on the sphere, spheroid and red blood cell and to about $10^{-7}$ on the dumbbell. Errors in $L^2(M)$ and in the $H^1(M)$ seminorm; rates against the geodesic mesh size. The red blood cell is in physical units (micrometers), so its errors carry those units.}
\label{tab:poisson2}
\footnotesize
\setlength{\tabcolsep}{4pt}
\begin{tabular}{llrrllll}
\toprule
geometry & $(d,r)$ & level & $\dim S_h$ & $L^2(M)$ error & rate & $H^1(M)$ error & rate \\
\midrule
sphere & $(2,0)$ & 1 & 162 & $4.04\times10^{-3}$ &  & $4.88\times10^{-2}$ &  \\
 & $(2,0)$ & 2 & 642 & $5.06\times10^{-4}$ & 3.17 & $1.23\times10^{-2}$ & 2.10 \\
 & $(2,0)$ & 3 & 2562 & $6.34\times10^{-5}$ & 3.04 & $3.10\times10^{-3}$ & 2.02 \\
 & $(4,0)$ & 1 & 642 & $4.97\times10^{-5}$ &  & $1.17\times10^{-3}$ &  \\
 & $(4,0)$ & 2 & 2562 & $1.64\times10^{-6}$ & 5.21 & $7.50\times10^{-5}$ & 4.19 \\
 & $(4,0)$ & 3 & 10242 & $5.20\times10^{-8}$ & 5.05 & $4.72\times10^{-6}$ & 4.05 \\
 & $(6,1)$ & 1 & 806 & $9.30\times10^{-7}$ &  & $3.00\times10^{-5}$ &  \\
 & $(6,1)$ & 2 & 3206 & $9.75\times10^{-9}$ & 6.96 & $5.63\times10^{-7}$ & 6.07 \\
\midrule
spheroid & $(2,0)$ & 1 & 162 & $6.36\times10^{-3}$ &  & $6.52\times10^{-2}$ &  \\
 & $(2,0)$ & 2 & 642 & $7.87\times10^{-4}$ & 3.19 & $1.66\times10^{-2}$ & 2.09 \\
 & $(2,0)$ & 3 & 2562 & $9.90\times10^{-5}$ & 3.04 & $4.17\times10^{-3}$ & 2.02 \\
 & $(4,0)$ & 1 & 642 & $6.65\times10^{-5}$ &  & $1.43\times10^{-3}$ &  \\
 & $(4,0)$ & 2 & 2562 & $2.44\times10^{-6}$ & 5.04 & $9.69\times10^{-5}$ & 4.10 \\
 & $(4,0)$ & 3 & 10242 & $8.39\times10^{-8}$ & 4.94 & $6.35\times10^{-6}$ & 3.99 \\
 & $(6,1)$ & 1 & 806 & $2.20\times10^{-6}$ &  & $5.33\times10^{-5}$ &  \\
 & $(6,1)$ & 2 & 3206 & $3.50\times10^{-8}$ & 6.32 & $1.36\times10^{-6}$ & 5.60 \\
\midrule
dumbbell & $(2,0)$ & 1 & 162 & $3.70\times10^{-3}$ &  & $3.00\times10^{-2}$ &  \\
 & $(2,0)$ & 2 & 642 & $3.33\times10^{-4}$ & 3.68 & $7.63\times10^{-3}$ & 2.09 \\
 & $(2,0)$ & 3 & 2562 & $7.68\times10^{-5}$ & 2.15 & $2.49\times10^{-3}$ & 1.64 \\
 & $(4,0)$ & 1 & 642 & $2.41\times10^{-4}$ &  & $3.56\times10^{-3}$ &  \\
 & $(4,0)$ & 2 & 2562 & $1.45\times10^{-5}$ & 4.30 & $3.75\times10^{-4}$ & 3.44 \\
 & $(4,0)$ & 3 & 10242 & $4.05\times10^{-7}$ & 5.23 & $2.32\times10^{-5}$ & 4.07 \\
 & $(6,1)$ & 1 & 806 & $2.17\times10^{-5}$ &  & $4.44\times10^{-4}$ &  \\
 & $(6,1)$ & 2 & 3206 & $8.46\times10^{-7}$ & 4.95 & $2.79\times10^{-5}$ & 4.23 \\
\midrule
red blood cell & $(2,0)$ & 1 & 162 & $5.47\times10^{-1}$ &  & $2.11\times10^{0}$ &  \\
 & $(2,0)$ & 2 & 642 & $6.80\times10^{-2}$ & 3.18 & $5.38\times10^{-1}$ & 2.09 \\
 & $(2,0)$ & 3 & 2562 & $8.53\times10^{-3}$ & 3.04 & $1.35\times10^{-1}$ & 2.02 \\
 & $(4,0)$ & 1 & 642 & $6.55\times10^{-3}$ &  & $4.91\times10^{-2}$ &  \\
 & $(4,0)$ & 2 & 2562 & $2.27\times10^{-4}$ & 5.14 & $3.29\times10^{-3}$ & 4.13 \\
 & $(4,0)$ & 3 & 10242 & $7.57\times10^{-6}$ & 4.98 & $2.15\times10^{-4}$ & 3.99 \\
 & $(6,1)$ & 1 & 806 & $3.19\times10^{-4}$ &  & $2.71\times10^{-3}$ &  \\
 & $(6,1)$ & 2 & 3206 & $4.16\times10^{-6}$ & 6.63 & $6.47\times10^{-5}$ & 5.70 \\
\bottomrule
\end{tabular}
\end{table}

Table~\ref{tab:prescribed} gives Test 1 for the prescribed factor
$\lambda = 0.6\,(x_1^2 - x_2^2) - 0.4\,x_3^2$, an instance with no
embedded surface: the metric is defined by $\lambda$ alone, the
Hersch condition holds by symmetry, and the same computation applies
with $w = e^{2\lambda}$ evaluated at the quadrature points. The rates
are those of the embedded instances.

\begin{table}[htbp]
\centering
\caption{Poisson Test 1 for the prescribed factor $\lambda = 0.6\,(x_1^2 - x_2^2) - 0.4\,x_3^2$ with no embedding ($\osc\lambda = 1.2$). Errors in $L^2(M)$ and the $H^1(M)$ seminorm with rates against the geodesic mesh size.}
\label{tab:prescribed}
\footnotesize
\begin{tabular}{llrllll}
\toprule
$(d,r)$ & level & $\dim S_h$ & $L^2(M)$ & rate & $H^1(M)$ & rate \\
\midrule
$(2,0)$ & 1 & 162 & $1.26\times10^{-2}$ &  & $1.75\times10^{-1}$ &  \\
 & 2 & 642 & $1.72\times10^{-3}$ & 3.04 & $4.59\times10^{-2}$ & 2.04 \\
 & 3 & 2562 & $2.20\times10^{-4}$ & 3.01 & $1.16\times10^{-2}$ & 2.01 \\
 & 4 & 10242 & $2.77\times10^{-5}$ & 3.00 & $2.92\times10^{-3}$ & 2.00 \\
\midrule
$(4,0)$ & 1 & 642 & $1.12\times10^{-4}$ &  & $2.64\times10^{-3}$ &  \\
 & 2 & 2562 & $3.68\times10^{-6}$ & 5.22 & $1.70\times10^{-4}$ & 4.19 \\
 & 3 & 10242 & $1.17\times10^{-7}$ & 5.05 & $1.07\times10^{-5}$ & 4.05 \\
 & 4 & 40962 & $3.66\times10^{-9}$ & 5.01 & $6.70\times10^{-7}$ & 4.01 \\
\midrule
$(6,1)$ & 1 & 806 & $2.32\times10^{-6}$ &  & $7.34\times10^{-5}$ &  \\
 & 2 & 3206 & $2.49\times10^{-8}$ & 6.92 & $1.40\times10^{-6}$ & 6.05 \\
 & 3 & 12806 & $2.42\times10^{-10}$ & 6.79 & $2.49\times10^{-8}$ & 5.90 \\
 & 4 & 51206 & $2.19\times10^{-12}$ & 6.81 & $4.22\times10^{-10}$ & 5.91 \\
\bottomrule
\end{tabular}
\end{table}

\begin{figure}[htbp]
\centering
\includegraphics[width=\textwidth]{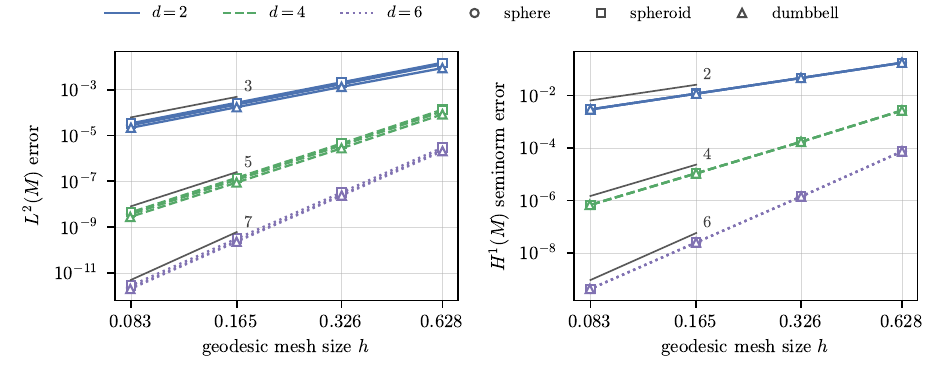}
\caption{Poisson problem, Test 1, icosahedral levels $1$ to $4$. Left:
error in $L^2(M)$; right: error in the $H^1(M)$ seminorm, against the
geodesic mesh size, for $d = 2, 4, 6$ (line style) on the sphere, the
spheroid and the dumbbell (marker). The short gray segments have the
slopes $d+1$ (left) and $d$ (right). The three geometries coincide in
energy because the load vector of Test 1 does not contain the conformal
factor, and are within a factor $E$ of each other in $L^2(M)$. The red
blood cell, in micrometers, is omitted from the plot.}
\label{fig:poisson-conv}
\end{figure}

\bibliographystyle{siamplain}
\bibliography{refs}

@article{AinsworthAD11,
  author = {M.~Ainsworth and G.~Andriamaro and O.~Davydov},
  title = {{B}ernstein--{B}\'ezier finite elements of arbitrary order and optimal assembly procedures},
  journal = {SIAM J. Sci. Comput.},
  volume = {33},
  year = {2011},
  pages = {3087--3109}
}

@incollection{AlfNS95, author={P. Alfeld and M. Neamtu and L. L. Schumaker}, title={Circular {B}ernstein--{B}\'ezier polynomials}, booktitle={Mathematical Methods for Curves and Surfaces}, editor={M. D{\ae}hlen and T. Lyche and L. L. Schumaker}, publisher={Vanderbilt University Press}, address={Nashville}, year={1995}, pages={11--20}}

@article{AlfNS96a,
  author = {P.~Alfeld and M.~Neamtu and L.~L.~Schumaker},
  title = {Fitting scattered data on sphere-like surfaces using spherical splines},
  journal = {J. Comput. Appl. Math.},
  volume = {73},
  year = {1996},
  pages = {5--43}
}

@article{AlfNS96b,
  author = {P.~Alfeld and M.~Neamtu and L.~L.~Schumaker},
  title = {{B}ernstein--{B}\'ezier polynomials on spheres and sphere-like surfaces},
  journal = {Comput. Aided Geom. Design},
  volume = {13},
  year = {1996},
  pages = {333--349}
}

@article{AlfNS96c,
  author = {P.~Alfeld and M.~Neamtu and L.~L.~Schumaker},
  title = {Dimension and local bases of homogeneous spline spaces},
  journal = {SIAM J. Math. Anal.},
  volume = {27},
  year = {1996},
  pages = {1482--1501}
}

@book{AtkinsonHan12, author={K. Atkinson and W. Han}, title={{Spherical Harmonics and Approximations on the Unit Sphere: An Introduction}}, series={Lecture Notes in Mathematics}, volume={2044}, publisher={Springer}, address={Berlin}, year={2012}}

@incollection{BabuskaOsborn91, author={I. Babu\v{s}ka and J. Osborn}, title={Eigenvalue problems}, booktitle={Handbook of Numerical Analysis, Vol.~II}, editor={P. G. Ciarlet and J. L. Lions}, publisher={North-Holland}, address={Amsterdam}, year={1991}, pages={641--787}}

@article{BenziGolubLiesen05, author={M. Benzi and G. H. Golub and J. Liesen}, title={Numerical solution of saddle point problems}, journal={Acta Numer.}, volume={14}, year={2005}, pages={1--137}}

@article{BonitoDemlow19,
  author = {A.~Bonito and A.~Demlow},
  title = {A posteriori error estimates for the {L}aplace--{B}eltrami operator on parametric $C^2$ surfaces},
  journal = {SIAM J. Numer. Anal.},
  volume = {57},
  year = {2019},
  pages = {973--996}
}

@incollection{BonitoDemlowNochetto20, author={A. Bonito and A. Demlow and R. H. Nochetto}, title={Finite element methods for the {L}aplace--{B}eltrami operator}, booktitle={Geometric Partial Differential Equations, Part I}, series={Handbook of Numerical Analysis}, volume={21}, publisher={Elsevier}, address={Amsterdam}, year={2020}, pages={1--103}}

@article{BonitoDemlowOwen18,
  author = {A.~Bonito and A.~Demlow and J.~Owen},
  title = {A priori error estimates for finite element approximations to eigenvalues and eigenfunctions of the {L}aplace--{B}eltrami operator},
  journal = {SIAM J. Numer. Anal.},
  volume = {56},
  year = {2018},
  pages = {2963--2988}
}

@article{Boffi10,
  author = {D.~Boffi},
  title = {Finite element approximation of eigenvalue problems},
  journal = {Acta Numer.},
  volume = {19},
  year = {2010},
  pages = {1--120}
}

@incollection{BarL05, author={V. Baramidze and M.-J. Lai}, title={Error bounds for minimal energy interpolatory spherical splines}, booktitle={Approximation Theory XI: Gatlinburg 2004}, editor={C. K. Chui and M. Neamtu and L. L. Schumaker}, publisher={Nashboro Press}, address={Brentwood, TN}, year={2005}, pages={25--50}}

@incollection{BarLai06, author={V. Baramidze and M.-J. Lai}, title={Spherical spline solution to a {PDE} on the sphere}, booktitle={Wavelets and Splines: Athens 2005}, editor={G. Chen and M.-J. Lai}, publisher={Nashboro Press}, address={Brentwood, TN}, year={2006}, pages={75--92}}

@article{BarLai11,
  author = {V.~Baramidze and M.-J.~Lai},
  title = {Convergence of discrete and penalized least squares spherical splines},
  journal = {J. Approx. Theory},
  volume = {163},
  year = {2011},
  pages = {1091--1106}
}

@article{BarLai18,
  author = {V.~Baramidze and M.~J.~Lai},
  title = {Nonnegative data interpolation by spherical splines},
  journal = {J. Comput. Appl. Math.},
  volume = {342},
  year = {2018},
  pages = {463--477}
}

@article{BarLaiShum06,
  author = {V.~Baramidze and M.-J.~Lai and C.~K.~Shum},
  title = {Spherical splines for data interpolation and fitting},
  journal = {SIAM J. Sci. Comput.},
  volume = {28},
  year = {2006},
  pages = {241--259}
}

@book{BrennerScott08, author={S. C. Brenner and L. R. Scott}, title={{The Mathematical Theory of Finite Element Methods}}, edition={3rd}, publisher={Springer}, address={New York}, year={2008}}

@book{Chavel84, author={I. Chavel}, title={{Eigenvalues in {R}iemannian Geometry}}, publisher={Academic Press}, address={Orlando}, year={1984}}

@article{Demlow09,
  author = {A.~Demlow},
  title = {Higher-order finite element methods and pointwise error estimates for elliptic problems on surfaces},
  journal = {SIAM J. Numer. Anal.},
  volume = {47},
  year = {2009},
  pages = {805--827}
}

@article{DemlowDziuk07,
  author = {A.~Demlow and G.~Dziuk},
  title = {An adaptive finite element method for the {L}aplace--{B}eltrami operator on implicitly defined surfaces},
  journal = {SIAM J. Numer. Anal.},
  volume = {45},
  year = {2007},
  pages = {421--442}
}

@incollection{Dziuk88, author={G. Dziuk}, title={Finite elements for the {B}eltrami operator on arbitrary surfaces}, booktitle={Partial Differential Equations and Calculus of Variations}, series={Lecture Notes in Mathematics}, volume={1357}, publisher={Springer}, address={Berlin}, year={1988}, pages={142--155}}

@article{DziukElliott13,
  author = {G.~Dziuk and C.~M.~Elliott},
  title = {Finite element methods for surface {PDEs}},
  journal = {Acta Numer.},
  volume = {22},
  year = {2013},
  pages = {289--396}
}

@article{EvansFung72,
  author = {E.~Evans and Y.-C.~Fung},
  title = {Improved measurements of the erythrocyte geometry},
  journal = {Microvasc. Res.},
  volume = {4},
  year = {1972},
  pages = {335--347}
}

@book{GolubVanLoan13, author={G. H. Golub and C. F. Van Loan}, title={{Matrix Computations}}, edition={4th}, publisher={Johns Hopkins University Press}, address={Baltimore}, year={2013}}

@inproceedings{GuYau03, author={X. Gu and S.-T. Yau}, title={Global conformal surface parameterization}, booktitle={Proceedings of the Eurographics/ACM SIGGRAPH Symposium on Geometry Processing}, year={2003}, pages={127--137}}

@article{Hersch70, author={J. Hersch}, title={Quatre propri\'et\'es isop\'erim\'etriques de membranes sph\'eriques homog\`enes}, journal={C. R. Acad. Sci. Paris S\'er. A}, volume={270}, year={1970}, pages={1645--1648}}

@article{KNPP21,
  author = {M.~Karpukhin and N.~Nadirashvili and A.~Penskoi and I.~Polterovich},
  title = {An isoperimetric inequality for {L}aplace eigenvalues on the sphere},
  journal = {J. Differential Geom.},
  volume = {118},
  year = {2021},
  pages = {313--333}
}

@article{KSBC12,
  author = {M.~Kazhdan and J.~Solomon and M.~Ben-Chen},
  title = {Can mean-curvature flow be modified to be non-singular?},
  journal = {Comput. Graph. Forum},
  volume = {31},
  year = {2012},
  pages = {1745--1754}
}

@book{LaiSchumaker07, author={M.-J. Lai and L. L. Schumaker}, title={{Spline Functions on Triangulations}}, publisher={Cambridge University Press}, address={Cambridge}, year={2007}}

@book{LSY98, author={R. B. Lehoucq and D. C. Sorensen and C. Yang}, title={{{ARPACK} Users' Guide: Solution of Large-Scale Eigenvalue Problems with Implicitly Restarted {A}rnoldi Methods}}, publisher={SIAM}, address={Philadelphia}, year={1998}}

@article{LiYau82,
  author = {P.~Li and S.-T.~Yau},
  title = {A new conformal invariant and its applications to the {W}illmore conjecture and the first eigenvalue of compact surfaces},
  journal = {Invent. Math.},
  volume = {69},
  year = {1982},
  pages = {269--291}
}

@article{NeamtuSchumaker04,
  author = {M.~Neamtu and L.~L.~Schumaker},
  title = {On the approximation order of splines on spherical triangulations},
  journal = {Adv. Comput. Math.},
  volume = {21},
  year = {2004},
  pages = {3--20}
}

@book{Taylor11, author={M. E. Taylor}, title={{Partial Differential Equations I: Basic Theory}}, edition={2nd}, publisher={Springer}, address={New York}, year={2011}}

@incollection{AwanouLaiWenston06, author={G. Awanou and M.-J. Lai and P. Wenston}, title={The multivariate spline method for scattered data fitting and numerical solution of partial differential equations}, booktitle={Wavelets and Splines: Athens 2005}, editor={G. Chen and M.-J. Lai}, publisher={Nashboro Press}, address={Brentwood, TN}, year={2006}, pages={24--75}}

@book{Jost06, author={J. Jost}, title={{Compact {R}iemann Surfaces: An Introduction to Contemporary Mathematics}}, edition={3rd}, series={Universitext}, publisher={Springer}, address={Berlin}, year={2006}}

@unpublished{Kapita26BCC, author={S. Kapita}, title={Bernstein constraint complexes for multivariate splines on triangulated surfaces}, note={Preprint}, year={2026}}

@article{LaiLee22, author={M.-J. Lai and J. Lee}, title={A multivariate spline based collocation method for numerical solution of partial differential equations}, journal={SIAM J. Numer. Anal.}, volume={60}, year={2022}, pages={2405--2434}}

@article{LaiLee23, author={M.-J. Lai and J. Lee}, title={Trivariate spline collocation methods for numerical solution to {3D} {M}onge--{A}mp\`ere equation}, journal={J. Sci. Comput.}, volume={95}, year={2023}, pages={56}}

\end{document}